\documentclass{amsart}
\usepackage{shortsalch, xy}
\xyoption{all}
\title{Obstructions to compatible splittings.}
\date{February 2014.}

\usepackage[OT2,T1]{fontenc}
\DeclareSymbolFont{cyrletters}{OT2}{wncyr}{m}{n}
\DeclareMathSymbol{\Sha}{\mathalpha}{cyrletters}{"58}
\begin{document}

\begin{abstract}
Suppose one has a map of split short exact sequences in a category of modules,
or more generally, in any abelian category. Do the short exact sequences
split compatibly, i.e., does there exist a splitting of each short exact
sequence which commutes with the map of short exact sequences?
The answer is sometimes yes and sometimes no. We define and prove
basic properties of
an obstruction group to the existence of compatible splittings.
\end{abstract}

\maketitle
\tableofcontents

\section{Introduction.}

Suppose we have a map of short exact sequences (of modules over a ring,
sheaves over a scheme, in general of objects in any abelian category):
\begin{equation}\label{diag 100}\xymatrix{ 
0 \ar[r]\ar[d] & M^{\prime} \ar[r]\ar[d]^g & M\ar[r]\ar[d] & M^{\prime\prime}\ar[r]\ar[d]^f & 0 \ar[d] \\
0 \ar[r] & N^{\prime} \ar[r] & N\ar[r] & N^{\prime\prime} \ar[r] & 0. }\end{equation}
Suppose the top row splits and suppose the bottom row also splits. Does there
exist a {\em compatible} splitting of the two short exact sequences?
In other words, can we find choices of splitting maps of the two rows
which commute with the vertical maps in the diagram?
We call this the {\em compatible splitting problem.}

At a glance one might hope that the answer to this question is always ``yes,''
but in fact the answer is often ``no,'' even for very simple short exact sequences
in very simple (in fact, semisimple!) abelian categories. For example,
the reader can easily verify that, if $k$ is a field, 
\[\xymatrix{ 0 \ar[r]\ar[d] & 0 \ar[r]\ar[d] & k \ar[r]^{\id}\ar[d]^{\id} & k \ar[r]\ar[d] & 0 \ar[d] \\
0 \ar[r] & k \ar[r]^{\id} & k \ar[r] & 0 \ar[r] & 0 }\]
is a map of short exact sequences of $k$-vector spaces in which both the
top and the bottom row split, but the rows cannot be {\em compatibly} split.

Now one asks the natural question: 
\begin{question}\label{the question}
What conditions on $f$ or on $g$ guarantee the existence
of a compatible splitting?\end{question}
This question is an obvious one, 
and one which arose for us during a concrete and practical computation
(see below, at the end of this introduction), but
we could not find any answer to it in the existing literature.
In this paper, we answer this question
by developing an obstruction group to the existence
of certain splittings,
with our main application being to the compatible splitting problem.
As one might expect, our obstruction group 
is the kernel of a map of $\Ext$-groups.
Specifically, if diagram~\ref{diag 100} is a commutative diagram
with split exact rows in an abelian category $\mathcal{C}$, then 
the whole diagram represents some element $\alpha$ of
$\Ext^1_{\mathcal{C}^{(\bullet\rightarrow\bullet)}}(M^{\prime\prime}\stackrel{f}{\rightarrow} 
N^{\prime\prime}, M^{\prime}\stackrel{g}{\rightarrow} N^{\prime})$, where $\mathcal{C}^{(\bullet\rightarrow\bullet)}$ is the (abelian) category of morphisms in $\mathcal{C}$, i.e., functors
from the category $(\bullet\rightarrow \bullet)$, the category with two objects and one morphism between them, to $\mathcal{C}$.
Furthermore, $\alpha$ is in the kernel of the natural map
\begin{equation}\label{diag 1001} \Ext^1_{\mathcal{C}^{(\bullet\rightarrow\bullet)}}(M^{\prime\prime}\stackrel{f}{\rightarrow}
N^{\prime\prime}, M^{\prime}\stackrel{g}{\rightarrow} N^{\prime})
\rightarrow \Ext^1_{\mathcal{C}}(M^{\prime\prime},M^{\prime})\times \Ext^1_{\mathcal{C}}(N^{\prime\prime},N^{\prime})\end{equation}
since the top and bottom rows of diagram~\ref{diag 100} are assumed to split.

The kernel of map~\ref{diag 1001} is our obstruction group
to compatible splitting.
It bears a resemblance to the Tate-Shafarevich group classifying
failure of the Hasse principle for an abelian variety, as in 
e.g. \cite{MR2261462}. For that reason, we adopt the notation
$\Sha^1(f,g)$
for this obstruction group.

We actually define (in Definition~\ref{def of sha}) a sequence of $\Sha^n$ groups and in somewhat wider generality,
so that they apply to not only to the compatible splitting problem as defined above, but also
to longer exact sequences, i.e., those classified by $\Ext^n$ rather than
$\Ext^1$; and so that they are also the home of the splitting obstructions arising
in the following more general context.
One might have an exact sequence in an abelian category, and one might 
know that, on forgetting some structure possessed by objects in that abelian category, the exact sequence is split. One wants to know if the original exact sequence is also split. The compatible splitting problem, described above, is
the special case in which one has a short exact sequence in the category of 
arrows in $\mathcal{C}$, and one forgets the arrows, only remembering
their domains and codomains.
But one could instead, for example, have an exact sequence of
$k[x,y]$-modules which is split as an exact sequence of $k[x]$-modules
and as an exact sequence of $k[y]$-modules,
or one could have, for another example, an exact sequence
of representations of a profinite group $G$ which is split on restricting to
any maximal rank elementary abelian $p$-subgroup of $G$.
(In the latter case, when $G$ has $p$-rank one and finitely many
conjugacy classes of rank $1$ elementary abelian $p$-subgroups,
the group $\Sha^n$ occurs as the kernel of Quillen's map
\[ H^n_c(G; \mathbb{F}_p) \rightarrow \lim_{E\in \mathcal{A}_p(G)} H^n_c(E; \mathbb{F}_p) \]
in continuous group cohomology, from \cite{MR0298694},
where $\mathcal{A}_p$ is 
the category of elementary abelian $p$-subgroups of $G$.)
Our obstruction groups $\Sha^n$, and some of the theorems we prove
about them, are general enough to apply to these situations as well.

Here is a brief synopsis of what we accomplish in each section of the paper.
\begin{itemize}
\item 
In section 2, 
we construct the compatible splitting obstruction groups
$\Sha^n$, and we give a precise formulation of the splitting problems to whose solvability these
groups are the obstructions. 
\item 
Like the classical Tate-Shafarevich group, and as is clear
from the case when the $\Sha^n$ groups are the kernel of Quillen's
map above, the groups $\Sha^n$ are difficult to compute.
This difficulty arises in part because they are not (co)homological, that is,
they do not turn short exact sequences (other than split ones) 
into exact sequences.
So in section 3, 
we produce a relative-cohomological approximation $\Ext_E^n$ to
$\Sha^n$, which has the advantage that it 
turns (certain) non-split exact sequences into
long exact sequences.
In Theorem~\ref{sha agrees with relative ext} 
we prove a ``Hurewicz theorem'' for this cohomological approximation,
i.e., we produce a natural transformation between this $\Ext_E^n$
group and $\Sha^n$ which is an isomorphism when $n=1$.
\item In section 4, 
we produce the ``compatible splitting spectral sequence,'' which relates the higher $\Ext^n_E$ groups and the higher 
$\Sha^n$ groups.
As a corollary, we show that,
when the allowable class $E$ defining these $\Ext^n_E$-groups is hereditary,
the groups $\Sha^n$ and $\Sha^{n+1}$ fit into a certain exact sequence. 
This relationship between $\Sha^n$ and $\Sha^{n+1}$ is a curious duality-like phenomenon which in fact occurs
(the relative-hereditary condition is satisfied) in 
our most important application, the compatible splitting problem, as we demonstrate in the next section.
\item Finally, in section 5 
we specialize to the case of the compatible splitting problem.
In Corollary~\ref{split monics cor}, we use our cohomological approximation to $\Sha$ to 
show that, for a fixed choice of map $g$,
a compatible splitting exists for all diagrams of the 
form~\ref{diag 100} with split exact rows if and only if $g$
is split epic.
Dually, in Corollary~\ref{split epics cor}, we show that for a fixed choice of map $f$,
a compatible splitting exists for all diagrams of the 
form~\ref{diag 100} with split exact rows if and only if $f$
is split monic. (These last two sentences together constitute the simplest and straightforward answer, but not the most
general answer, to our Question~\ref{the question}.)
In Corollary~\ref{splitting duality}, 
we then demonstrate {\em compatible splitting duality},
a concrete special case of the duality described in the previous section:
the obstruction group $\Sha^n(f,g)$ is, by definition,
the {\em kernel} of the map
\begin{equation}\label{diag 1002} \Ext^n_{\mathcal{C}^{\bullet\rightarrow\bullet}}(f,g)
\rightarrow \Ext^n_{\mathcal{C}}(\dom f, \dom g)\times \Ext^n_{\mathcal{C}}(\cod f,\cod g),\end{equation}
but ``compatible splitting duality'' identifies 
the {\em cokernel} of the natural map
\[ \Ext^n_{\mathcal{C}}(\dom f, \dom g)\times \Ext^n_{\mathcal{C}}(\cod f,\cod g)
\rightarrow \Ext^n_{\mathcal{C}}(\dom f, \cod g)\]
with the compatible splitting obstruction group in the {\em next dimension,}
that is, $\Sha^{n+1}(f,g)$. This gives a curious relation
between the obstruction groups in adjacent dimensions which we think
is rather surprising.
\end{itemize}

We use these results in our paper~\cite{computationofmodelstructures}
in the course of a classification of all morphisms between modules over
length $2$ local principal Artin rings, such as $k[x]/x^2$ or $\mathbb{Z}/p^2\mathbb{Z}$. Our use of these results in that paper
boils down to the following issue: suppose 
$h: M\rightarrow N$ is a map of $R$-modules, and suppose $M_0$ is a direct summand
of $M$. Suppose we can even show that the image of $M_0$ under $h$ is a 
direct summand of $N$. Can we then split $h$ as a direct sum of the 
map $h\mid_{M_0}: M_0\rightarrow N_0$ with another map? 
This is precisely the compatible splitting problem:
the map $h\mid_{M_0}$ is the map $g$ as in diagram~\ref{diag 100},
and the quotient map $h/(h\mid_{M_0})$ is the map $f$
as in diagram~\ref{diag 100}. So we have found that the results in this paper 
are quite useful even for some very elementary algebraic tasks.

\section{The compatible splitting obstruction group, and its cohomological approximation.}

\begin{definition}\label{def of sha}
Let $\mathcal{C}$ be an abelian category, $I$ a finite set, and $\{\mathcal{C}_i: i\in I\}$ a finite set of abelian categories.
Suppose that, for each $i\in I$, we have a faithful exact functor $G_i: \mathcal{C}\rightarrow \mathcal{C}_i$ and a left adjoint
$F_i$ for $G_i$. 
We will write $G$ for the resulting functor $G: \times_{i\in I} \mathcal{C}_i \rightarrow \mathcal{C}$
given by $G((X_i)_{i\in I}) = \oplus_{i\in I} X_i$, and $F$ for its left adjoint
$F: \mathcal{C}\rightarrow \times_{i\in I}\mathcal{C}_i$ given by letting its component in the factor $\mathcal{C}_i$ be $F_i$.
(It is easy to show that $F$ is indeed left adjoint to $G$.)
Then, for any $n\in \mathbb{N}$ and any objects $X,Y$ of $\mathcal{C}$, by the {\em $n$th compatible splitting obstruction group $\Sha^n(X, Y)$} we shall mean the kernel of the map
\[ \Ext^n_{\mathcal{C}}(X, Y) \rightarrow \Ext^n_{\mathcal{C}}(FG X , Y)\]
induced by the counit map $\epsilon_X: FGX \rightarrow X$.
\end{definition}
\begin{remark}
We use the symbol $\Sha$ to denote the compatible splitting obstruction groups because of their similarity, both in its definition and its properties, 
to the higher Shafarevich-Tate groups of an abelian variety (see e.g. \cite{MR2261462}).

Note that $\Sha^n(X, Y)$ certainly depends on the choices made for $\{\mathcal{C}_i: i\in I\}$ and $\{ F_i: i\in I\}$, but 
in order to keep the notation manageable, we suppress these choices from the notation for $\Sha^n(X, Y)$.

Note also that we do {\em not} need to assume that $\mathcal{C}$ has enough injectives or enough projectives for Definition~\ref{def of sha} to make sense:
$\Ext^n_{\mathcal{C}}$ defined after Yoneda, as equivalence classes of length $n+2$ exact sequences in $\mathcal{C}$, does not require
injective or projective resolutions. See e.g. \cite{MR1344215} for basic material on $\Ext$ without enough injectives or enough projectives.
\end{remark}

\begin{definition} Let $\mathcal{C},\mathcal{D}$ be abelian categories, and let $F: \mathcal{D}\rightarrow \mathcal{C}$ be a additive functor.
We say that $F$ is {\em resolving} if, for every object $X$ of $\mathcal{C}$, there exists a projective object $Y$ of $\mathcal{C}$ such that
$F(Y)$ is a projective object in $\mathcal{D}$ and such that there exists an epimorphism $Y \rightarrow X$ in $\mathcal{C}.$
\end{definition}
For example, if $\mathcal{C},\mathcal{D}$ are categories of modules over rings, and $F$ sends free modules to free modules (e.g. $F$ could be a base-change/extension
of scalars functor), then $F$ is resolving.
\begin{lemma}\label{change of rings lemma}
Let $\mathcal{C},\mathcal{D}$ be abelian categories, $G: \mathcal{C}\rightarrow\mathcal{D}$ a functor with exact left adjoint $F$.
Suppose $F$ is resolving. Then, for all $n\in\mathbb{N}$, we have an isomorphism
\[ \Ext^n_{\mathcal{C}}(FGX, Y) \cong \Ext^n_{\mathcal{D}}(GX, GY)\]
natural in $X$ and $Y$.
\end{lemma}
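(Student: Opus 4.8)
The plan is to compute both sides using a single projective resolution of $GX$ in $\mathcal{D}$, chosen so as to be adapted to $F$, and then to pass between the two categories using the exactness of $F$ together with the adjunction $F \dashv G$ at the level of Hom-groups. The natural isomorphism $\operatorname{Hom}_{\mathcal{C}}(FA,B)\cong \operatorname{Hom}_{\mathcal{D}}(A,GB)$ for $A$ in $\mathcal{D}$ and $B$ in $\mathcal{C}$ already gives the case $n=0$ on setting $A=GX$ and $B=Y$; the content of the lemma is that this isomorphism persists into all higher degrees, and the resolving hypothesis is exactly what makes this work.

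First I would use the resolving hypothesis to build a projective resolution $\cdots \to P_1 \to P_0 \to GX \to 0$ of $GX$ in $\mathcal{D}$ in which every $P_i$ is projective in $\mathcal{D}$ \emph{and} $F(P_i)$ is projective in $\mathcal{C}$. This is the usual step-by-step construction: the resolving property supplies an epimorphism $P_0 \to GX$ with $P_0$ and $FP_0$ projective; one applies the same property to $\ker(P_0\to GX)$, and iterates. Since $F$ is exact, applying it to the (exact) augmented complex produces an exact augmented complex $\cdots \to FP_1 \to FP_0 \to FGX \to 0$ all of whose terms are projective in $\mathcal{C}$; in other words, $F(P_\bullet)$ is itself a projective resolution of $FGX$ in $\mathcal{C}$. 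Note that exactness of $F$, not merely its automatic right exactness, is what is used here.

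With both resolutions in hand the computation is short. The cochain complexes $\operatorname{Hom}_{\mathcal{C}}(F(P_\bullet),Y)$ and $\operatorname{Hom}_{\mathcal{D}}(P_\bullet, GY)$ are isomorphic, term by term and compatibly with the differentials, by the naturality of the adjunction isomorphism applied to the maps occurring in $P_\bullet$. Taking cohomology in degree $n$ then yields $\Ext^n_{\mathcal{C}}(FGX,Y)$ on the left, computed from the projective resolution $F(P_\bullet)$ of $FGX$, and $\Ext^n_{\mathcal{D}}(GX,GY)$ on the right, computed from the projective resolution $P_\bullet$ of $GX$. This is the asserted isomorphism.

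Two points require care, and I expect the first to be the real obstacle. Because the paper works with Yoneda $\Ext$ and pointedly does not assume that $\mathcal{C}$ or $\mathcal{D}$ has enough projectives, I must justify that the Yoneda $\Ext^n$ of an object which happens to admit a projective resolution agrees with the cohomology of the $\operatorname{Hom}$ complex of that resolution. The resolving hypothesis guarantees precisely that the two objects at issue, namely $GX$ in $\mathcal{D}$ and $FGX$ in $\mathcal{C}$, do admit such resolutions, so the standard comparison theorem (for which I would cite \cite{MR1344215}) applies to each separately, and the computation above is legitimate even in the Yoneda setting. The second, more routine, point is naturality in $X$ and $Y$: naturality in $Y$ is already visible at the level of the $\operatorname{Hom}$ complexes, while naturality in $X$ follows in the usual manner by lifting a morphism $X\to X'$ to a chain map between chosen resolutions of $GX$ and $GX'$, unique up to homotopy by projectivity, and checking that $F$ together with the adjunction carries this chain map to the corresponding one on the $\mathcal{C}$ side.
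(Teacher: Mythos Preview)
Your proof is correct and follows essentially the same route as the paper: build a projective resolution $P_\bullet$ of $GX$ in $\mathcal{D}$ with each $P_i$ and $FP_i$ projective, use exactness of $F$ to make $FP_\bullet$ a projective resolution of $FGX$, and then apply the adjunction termwise to the Hom-complexes. Your treatment is in fact more careful than the paper's, which omits the iterative construction of $P_\bullet$, the Yoneda-versus-derived-functor subtlety, and the verification of naturality in $X$.
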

\begin{proof}
Since $F$ is resolving, we can choose, for every object $X$ of $\mathcal{C}$, a chain complex $P_{\bullet}$ in $\mathcal{D}$
such that $P_n$ is projective in $\mathcal{D}$ for all $n$, such that $FP_n$ is projective in $\mathcal{C}$ for all $n$, and such that
the homology of $P_{\bullet}$ is $GX$, concentrated in degree zero. Since $F$ is exact, $FP_{\bullet}$ is a projective resolution of $FGX$.
So now we have (natural!) isomorphisms
\begin{eqnarray*} \Ext^n_{\mathcal{C}}(FGX, Y) & \cong & H^n(\hom_{\mathcal{C}}(FP_{\bullet}, Y)) \\
 & \cong & H^n(\hom_{\mathcal{D}}(P_{\bullet}, GY)) \\
 & \cong & \Ext^n_{\mathcal{D}}(GX, GY).\end{eqnarray*}
\end{proof}
Exactness of $F$ is necessary in Lemma~\ref{change of rings lemma}. For example, suppose $\mathcal{C} = \Mod(k)$ and $\mathcal{D} = \Mod(k[x])$,  and 
suppose that $F$ and $G$
are the induction and restriction of scalars functors, respectively, induced by the ring map $k[x]\rightarrow k$ sending $x$ to $0$. Then
one sees easily that $F$ is resolving but not exact, and that the conclusion of Lemma~\ref{change of rings lemma} fails dramatically.

\begin{definition}\label{def of allowable class}
Let $\mathcal{C},I,\{ \mathcal{C}_i: i\in I\}$ be as in Definition~\ref{def of sha}.
Suppose $F_i$ is exact and resolving for all $i\in I$. We let $E$ denote
the following allowable class in $\mathcal{C}$, in the sense of relative homological algebra:
$E$ consists of the class of all short exact sequences in $\mathcal{C}$ of the form
\[ 0 \rightarrow \ker\epsilon_Y \rightarrow  FG Y \stackrel{\epsilon_Y}{\longrightarrow} Y \rightarrow 0,\]
where $Y$ ranges across all objects of $\mathcal{C}$. (The map $\epsilon_Y$ is an epimorphism since each $G_i$ is assumed faithful,
hence each $F_i G_i Y\rightarrow Y$ is individually already an epimorphism, by e.g. theorem 1 of section IV.3 of \cite{MR1712872}.)
\end{definition}
Clearly $E$ depends on the choices made for $\{\mathcal{C}_i: i\in I\}$ and $\{ F_i: i\in I\}$, but 
in order to keep the notation manageable, we suppress these choices from the notation for $E$.

The following (easy!) theorem makes clear why the compatible splitting obstruction group is useful.
\begin{theorem}\label{main general thm}
Let $\mathcal{C},I,\{ \mathcal{C}_i: i\in I\}$ be as in Definition~\ref{def of sha}.
Suppose $F_i$ is exact and resolving for all $i\in I$.
Let $X,Y$ be objects of $\mathcal{C}$. Then the following conditions are equivalent:
\begin{itemize}
\item $\Sha^n(X, Y) \cong 0$.
\item For every length $n+2$ exact sequence $\alpha$ of the form
\[ 0 \rightarrow Y \rightarrow E_1 \rightarrow \dots \rightarrow E_n \rightarrow X\rightarrow 0,\]
$\alpha$ is split (in $\mathcal{C}$) if and only if, for all $i\in I$, the exact sequence $G_i(\alpha)$ is split (in $\mathcal{C}_i$).
\end{itemize}

If $n=1$, the above conditions are furthermore each equivalent to:
\begin{itemize}
\item For every morphism $f: \ker \epsilon_Y \rightarrow X$ in $\mathcal{C}$, there exists a map $g$ making the diagram
\[\xymatrix{
\ker \epsilon_Y \ar[r] \ar[d]_f & FGY \ar[ld]^g \\
X & }\] commute.
\end{itemize}
\end{theorem}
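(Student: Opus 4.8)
The plan is to show that all three conditions are equivalent to injectivity of the single group homomorphism $\Ext^n_{\mathcal{C}}(X,Y)\to\prod_{i\in I}\Ext^n_{\mathcal{C}_i}(G_iX,G_iY)$, $\alpha\mapsto(G_i\alpha)_{i\in I}$, induced by the exact functors $G_i$. First I would apply Lemma~\ref{change of rings lemma} to the adjunction $F\dashv G$ with $\mathcal{D}=\times_{i\in I}\mathcal{C}_i$; since each $F_i$ is exact and resolving, so is $F$, so the lemma supplies a natural isomorphism $\Ext^n_{\mathcal{C}}(FGX,Y)\cong\Ext^n_{\mathcal{D}}(GX,GY)$. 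As $\Ext$ in a finite product of abelian categories is computed componentwise, the target is $\prod_{i\in I}\Ext^n_{\mathcal{C}_i}(G_iX,G_iY)$ and $GX=(G_iX)_{i\in I}$. The one genuine point is that under this isomorphism the counit-induced map $\epsilon_X^*$ of Definition~\ref{def of sha} becomes $\alpha\mapsto(G_i\alpha)_{i\in I}$: tracing the lemma's isomorphism through the adjunction presents it on Yoneda classes as $\gamma\mapsto\eta_{GX}^*(G\gamma)$, with $\eta$ the unit, and then, since $G$ is exact and so commutes with pullbacks of extensions, one computes $\eta_{GX}^*G(\epsilon_X^*\alpha)=(G\epsilon_X\circ\eta_{GX})^*(G\alpha)=G\alpha$ by the triangle identity $G\epsilon_X\circ\eta_{GX}=\id_{GX}$. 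Consequently $\Sha^n(X,Y)=\ker\epsilon_X^*$ is exactly $\{\alpha:G_i\alpha=0\text{ for all }i\}$.

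Granting this, the equivalence of the first two conditions is formal. I would use the standard dictionary that a length $n+2$ exact sequence represents the zero class of the Yoneda group $\Ext^n_{\mathcal{C}}(X,Y)$ if and only if it splits, and likewise for each $G_i(\alpha)$ in $\Ext^n_{\mathcal{C}_i}(G_iX,G_iY)$. Since each exact additive $G_i$ induces a homomorphism $\Ext^n_{\mathcal{C}}(X,Y)\to\Ext^n_{\mathcal{C}_i}(G_iX,G_iY)$ carrying $0$ to $0$, the implication ``$\alpha$ split $\Rightarrow$ each $G_i(\alpha)$ split'' is automatic. Hence the second condition amounts to the converse implication holding for every $\alpha$, that is, to injectivity of $\alpha\mapsto(G_i\alpha)_{i\in I}$, which by the previous paragraph is precisely $\Sha^n(X,Y)\cong 0$.

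For the remaining equivalence at $n=1$, I would apply the contravariant functor $\hom_{\mathcal{C}}(-,Y)$ to the member $0\to\ker\epsilon_X\to FGX\stackrel{\epsilon_X}{\longrightarrow}X\to 0$ of the allowable class $E$ of Definition~\ref{def of allowable class} and read off its long exact sequence. Exactness at $\Ext^1_{\mathcal{C}}(X,Y)$ identifies $\Sha^1(X,Y)=\ker\epsilon_X^*$ with the image of the connecting map $\delta\colon\hom_{\mathcal{C}}(\ker\epsilon_X,Y)\to\Ext^1_{\mathcal{C}}(X,Y)$, and exactness one step earlier shows that the restriction map $\hom_{\mathcal{C}}(FGX,Y)\to\hom_{\mathcal{C}}(\ker\epsilon_X,Y)$ is surjective exactly when $\delta=0$. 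The extension property displayed in the third condition is precisely this surjectivity (every $f\colon\ker\epsilon_X\to Y$ factors as some $g\colon FGX\to Y$ restricted along the inclusion $\ker\epsilon_X\hookrightarrow FGX$), so it holds if and only if $\delta=0$, i.e.\ if and only if $\Sha^1(X,Y)\cong 0$.

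I expect the main obstacle to be the identification in the first paragraph, namely checking that the isomorphism of Lemma~\ref{change of rings lemma} intertwines the abstract counit-induced map $\epsilon_X^*$ with the concrete functor-induced map $\alpha\mapsto(G_i\alpha)_i$; this is where one must track the lemma's isomorphism at the level of Yoneda $n$-extensions and invoke the triangle identity. Once that is in place, together with the ``split $=$ zero in Yoneda $\Ext$'' dictionary, all three equivalences fall out formally: the first two from injectivity of one homomorphism, and the last from the long exact $\Ext$-sequence of an allowable short exact sequence.
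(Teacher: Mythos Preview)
Your proposal is correct and follows essentially the same route as the paper: identify $\epsilon_X^*$ with $\alpha\mapsto(G_i\alpha)_i$ via Lemma~\ref{change of rings lemma} (the paper simply asserts the relevant commutative diagram, whereas you spell out the triangle-identity check), then read off the first equivalence from injectivity and the $n=1$ case from the long exact sequence of $0\to\ker\epsilon_X\to FGX\to X\to 0$. Note that the paper's own statement and proof swap the roles of $X$ and $Y$ midway (writing $\ker\epsilon_Y\to X$ rather than $\ker\epsilon_X\to Y$); your version is the one consistent with Definition~\ref{def of sha}.
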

\begin{proof}
From Lemma~\ref{change of rings lemma} we have a commutative diagram
\[ \xymatrix{ \Ext^n_{\mathcal{C}}(Y, X) \ar[r]\ar[rd] & \Ext^n_{\mathcal{C}}( F G Y, X) \ar[d]^{\cong} \\
 & \Ext^n_{\mathcal{C}}( \oplus_{i\in I} F_i G_i Y, X) \ar[d]^{\cong} \\
 & \oplus_{i\in I} \Ext^n_{\mathcal{C}_i}(G_i Y, G_i X).}\]
Now an element $\alpha\in \Ext^n_{\mathcal{C}}(Y, X)$ maps to zero in $\oplus_{i\in I} \Ext^n_{\mathcal{C}_i}(G_i Y, G_i X)$ if and only if
$G_i(\alpha) = 0\in \Ext^n_{\mathcal{C}_i}(G_i Y, G_i X)$ for each $i\in I$, i.e., if and only if
the length $n+2$ exact sequence represented by $\alpha$ becomes split in $\mathcal{C}_i$ after applying $G_i$,
for all $i\in I$.

The claim for $n=1$ follows from the exact sequence
\[ \hom_{\mathcal{C}}(FG Y, X) \rightarrow \hom_{\mathcal{C}}(\ker \epsilon_Y, X) \rightarrow \Ext_{\mathcal{C}}^1(Y, X) \rightarrow \Ext_{\mathcal{C}}^1(FG Y, X),\]
hence the exact sequence
\[ \hom_{\mathcal{C}}(FG Y, X) \rightarrow \hom_{\mathcal{C}}(\ker \epsilon_Y, X) \rightarrow \Sha^1(Y, X) \rightarrow 0.\]
\end{proof}

\begin{corollary}
Let $\mathcal{C},I,\{ \mathcal{C}_i: i\in I\}$ be as in Definition~\ref{def of sha}, and let $E$ be as in 
Definition~\ref{def of allowable class}.
Then the following are equivalent, for a given object $X$ of $\mathcal{C}$:
\begin{itemize}
\item $\Sha^1(Y, X) \cong 0$ for all objects $Y$ of $\mathcal{C}$.
\item For every short exact sequence
\[ \alpha = \left( 0 \rightarrow X \rightarrow Y \rightarrow Z \rightarrow 0 \right)\]
in $\mathcal{C}$, the short exact sequence $\alpha$ splits (in $\mathcal{C}$) if and only if $G_i(\alpha)$ splits (in $\mathcal{C}_i$) for all $i$.
\item $X$ 
is $E$-injective.
\end{itemize}
\end{corollary}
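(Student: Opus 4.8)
The plan is to read the three conditions off Theorem~\ref{main general thm} together with the standard dictionary of relative homological algebra attached to the comonad $FG$. The equivalence of the first two conditions is immediate from the $n=1$ case of that theorem: for fixed $Y$ it asserts that $\Sha^1(Y,X)\cong 0$ holds exactly when every short exact sequence $0\rightarrow X\rightarrow E_1\rightarrow Y\rightarrow 0$ splits if and only if each $G_i$ applied to it splits. Conjoining over all $Y$, with $Y$ playing the role of the cokernel $Z$ and $E_1$ the middle term, gives verbatim the second condition. The only thing to record is that ``split $\Rightarrow G_i$-split'' is automatic, since each $G_i$ is additive and so preserves split exactness; the content is all in the reverse implication.

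To involve $E$-injectivity I would identify the allowable class $E$ of Definition~\ref{def of allowable class} with the class of $G$-split short exact sequences, i.e.\ those $\alpha$ for which $G_i(\alpha)$ splits for every $i$. Each generating sequence $0\rightarrow\ker\epsilon_Y\rightarrow FGY\stackrel{\epsilon_Y}{\rightarrow}Y\rightarrow 0$ is $G$-split, because the triangle identity $G(\epsilon_Y)\circ\eta_{GY}=\id_{GY}$ furnishes a section of $G(\epsilon_Y)$, and exactness of each $G_i$ then forces the whole sequence to split after applying $G_i$. Conversely, if $\alpha=(0\rightarrow A\rightarrow B\stackrel{p}{\rightarrow}C\rightarrow 0)$ is $G$-split and $s$ is a section of $G(p)$, then naturality of $\epsilon$ gives $p\circ\epsilon_B\circ F(s)=\epsilon_C$, so $\epsilon_C$ factors through $p$; this is exactly the assertion that $p$ is an $E$-admissible epimorphism in the $FG$-relative theory. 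Hence $\alpha\in E$ if and only if $G_i(\alpha)$ splits for all $i$.

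Finally I would use the standard characterization that $X$ is $E$-injective precisely when every $E$-admissible short exact sequence $0\rightarrow X\rightarrow B\rightarrow C\rightarrow 0$ splits. By the identification of the previous paragraph this says that every $G$-split short exact sequence beginning at $X$ splits, which---since splitting always implies $G$-splitting---is word for word the second condition, closing the cycle. The step I expect to demand the most care is the middle one: checking, in the language of allowable classes, that the comonad $FG$ presents exactly the $G$-split proper class, so that abstract $E$-admissibility becomes the concrete condition ``$G_i(\alpha)$ splits for all $i$.'' That identification is what licenses replacing the abstract injectivity condition by the splitting statement of the corollary; once it is in hand, the rest is formal. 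As a cross-check one can instead compare the extension-property clause of Theorem~\ref{main general thm} (every $f\colon\ker\epsilon_Y\rightarrow X$ extends along $\ker\epsilon_Y\hookrightarrow FGY$, for all $Y$) with the fact that $E$-injectivity may be tested against the canonical admissible monomorphisms $\ker\epsilon_Y\hookrightarrow FGY$, since these come from a generating family of $E$-projectives; this yields the equivalence of the first and third conditions directly.
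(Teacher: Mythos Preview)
Your argument for (1)$\Leftrightarrow$(2) is exactly the paper's. For the link with (3), your proposal is correct but takes a longer path than the paper. You route the argument through the standard comonad--relative picture: identify the allowable class with the $G$-split short exact sequences, then invoke the characterization of $E$-injectives as those $X$ for which every admissible sequence $0\to X\to B\to C\to 0$ splits. That works, and your factorization $p\circ\epsilon_B\circ F(s)=\epsilon_C$ is the right lever for showing $G$-split epimorphisms are $E$-admissible.

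The paper avoids all of this. With its Definition~\ref{def of allowable class}, the class $E$ literally consists \emph{only} of the counit sequences $0\to\ker\epsilon_Y\to FGY\to Y\to 0$; so by definition $X$ is $E$-injective precisely when every map $\ker\epsilon_Y\to X$ extends over $FGY$, for all $Y$. That is verbatim the third clause of Theorem~\ref{main general thm}, and the paper simply says so. In other words, what you present as a ``cross-check'' \emph{is} the paper's proof, while your main route establishes the (true, but unnecessary here) fact that the narrow $E$ and the full $G$-split class yield the same injectives. One small caution: your sentence ``$\alpha\in E$ iff $G_i(\alpha)$ splits for all $i$'' is not literally correct under the paper's narrow definition of $E$; what you actually need, and what your argument delivers, is that the two classes determine the same relative injectives.
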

\begin{proof}
That the first and second conditions are equivalent follows from Theorem~\ref{main general thm}. 
The third condition is plainly the general (for all objects $Y$) form of the third condition from Theorem~\ref{main general thm}, hence
equivalent to the first two.
\end{proof}

Another corollary of Theorem~\ref{main general thm} is provided in Corollary~\ref{diagram splittings corollary}.

\section{The Hurewicz theorem.}

One knows the Hurewicz theorem from classical algebraic topology: there exists a natural transformation $\pi_*\rightarrow H_*$, that is, from 
the homotopy groups functor 
to the homology groups functor; in degree $1$ it is the abelianization functor; and if $\pi_i(X)$ vanishes for all $1\leq i\leq n-1$, then the degree $n$
Hurewicz map $\pi_n(X)\rightarrow H_n(X)$ is an isomorphism. Hence, while homotopy is difficult to compute but of great intrinsic interest, 
homology is homological (turns cofiber sequences to long exact sequences) and hence much easier to compute, and the Hurewicz theorem tells us that
homology is a good homological approximation to homotopy. 

We now prove a Hurewicz theorem which compares the $\Sha$ groups to the relative $\Ext$-groups $\Ext_{\mathcal{C}/E}$. The moral to this theorem is the following:
the $\Sha$ groups are not (co)homological, they do not turn (non-split) short exact sequences to long exact sequences, making them difficult to compute. 
However, 
the $\Ext_{\mathcal{C}/E}$-groups turn $E$-short exact sequences to long exact sequences, making them (in principle, and sometimes also in practice)
easier to compute than the $\Sha$ groups: see Theorem~\ref{main thm on morphisms}, for example, where we prove a broad vanishing theorem for
the $\Ext_{\mathcal{C}/E}$-groups under circumstances where the $\Sha$-groups do not usually vanish and are usually quite nontrivial to compute.
As $\Ext_{\mathcal{C}/E}$ is cohomological rather than homological (and $\Sha$ is cohomotopical rather than homotopical?), our Hurewicz natural transformation
goes from $\Ext_{\mathcal{C}/E}^n$ to $\Sha^n$, rather than the reverse. This natural transformation is an isomorphism in degree $1$, just as in the classical
Hurewicz theorem, as we now prove:
\begin{theorem}\label{sha agrees with relative ext} {\bf (Hurewicz theorem for $\Sha$.)} 
Let $\mathcal{C},I,\{ \mathcal{C}_i: i\in I\}$ be as in Definition~\ref{def of sha}, and let $E$ 
be the allowable class defined in Definition~\ref{def of allowable class}.
Suppose $F_i$ is exact and resolving for all $i\in I$.
Then the category $\mathcal{C}$ has enough
$E$-projectives. Furthermore, for each object $X$ of $\mathcal{C}$ and each $n\in \mathbb{N}$, there exists a natural transformation
\[ H: \Ext^n_{{\mathcal{C}/E}}(-, X) \rightarrow \Sha^n(-, X),\]
which we call the {\em Hurewicz map for $\Sha$.} 
In degree one, the Hurewicz map is a natural isomorphism:
\[ \Ext^1_{{\mathcal{C}/E}}(-, X) \stackrel{\cong}{\longrightarrow} \Sha^1(-, X).\]
\end{theorem}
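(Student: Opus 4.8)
The plan is to treat the three assertions separately, leaning on the two facts already in hand: the change-of-rings isomorphism of Lemma~\ref{change of rings lemma}, and the four-term exact sequence extracted in the proof of Theorem~\ref{main general thm}.

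First I would settle \emph{enough $E$-projectives}. I would begin by observing that every sequence displayed in Definition~\ref{def of allowable class} is $G$-split: the triangle identity $G\epsilon_Y \circ \eta_{GY} = \id_{GY}$ exhibits the unit $\eta_{GY}$ as a section of $G\epsilon_Y$, so applying the exact functor $G$ splits the sequence. I therefore take $E$ to be the resulting allowable (proper) class of $G$-split short exact sequences, of which these are the canonical presentations. Two checks remain: that each $FGY$ is $E$-projective, and that $\epsilon_X\colon FGX\to X$ is an $E$-epimorphism. The latter is immediate from Definition~\ref{def of allowable class}. For the former, given an $E$-epimorphism $p\colon A\to B$ and a map $f\colon FW\to B$, I would transpose $f$ across the adjunction to $\tilde f\colon W\to GB$, compose with a section $s$ of the split epimorphism $Gp$, and transpose $s\circ\tilde f$ back to a map $FW\to A$; naturality of the adjunction bijection together with $Gp\circ s=\id$ yields a lift of $f$ through $p$. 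Taking $W=GY$ shows $FGY$ is $E$-projective, and since $\epsilon_X$ presents every $X$ as an $E$-quotient of the $E$-projective $FGX$, there are enough $E$-projectives.

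Next I would construct the \emph{Hurewicz map} via the Yoneda descriptions of both groups. Define $H$ to be the forgetful map sending the class of an $E$-exact $n$-extension to its class in ordinary $\Ext^n_{\mathcal{C}}$, and check that $H$ lands in $\Sha^n$. The key point is that an $E$-exact $n$-extension has all its short-exact slices $G$-split, so applying $G_i$ splits each slice and hence splits the whole length $n+2$ sequence in $\mathcal{C}_i$; thus $G_i(\alpha)=0$ for every $i$. Combining Lemma~\ref{change of rings lemma} with the identification of $\epsilon_Y^*$ as $\alpha\mapsto(G_i\alpha)_i$ (exactly the commutative diagram in the proof of Theorem~\ref{main general thm}) then gives $\epsilon_Y^*\alpha=0$, i.e.\ $\alpha\in\Sha^n(Y,X)$. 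This defines $H\colon \Ext^n_{\mathcal{C}/E}(-,X)\to\Sha^n(-,X)$; naturality in the first variable is inherited from naturality of the forgetful map and from $\Sha^n$ being a subfunctor of $\Ext^n_{\mathcal{C}}$. Using the Yoneda rather than the resolution description keeps me from assuming enough absolute projectives, which the paper deliberately avoids.

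Finally I would prove the \emph{degree-one isomorphism} by exhibiting both sides as the \emph{same} cokernel. Writing $\iota\colon\ker\epsilon_Y\hookrightarrow FGY$, applying $\hom_{\mathcal{C}}(-,X)$ to the $E$-short exact sequence $0\to\ker\epsilon_Y\to FGY\xrightarrow{\epsilon_Y}Y\to 0$, and using that relative $\Ext$ is a relative cohomological functor with $\Ext^1_{\mathcal{C}/E}(FGY,X)=0$ (since $FGY$ is $E$-projective), I obtain $\Ext^1_{\mathcal{C}/E}(Y,X)\cong\operatorname{coker}\bigl(\iota^*\colon\hom(FGY,X)\to\hom(\ker\epsilon_Y,X)\bigr)$. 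On the other hand, the four-term sequence from the proof of Theorem~\ref{main general thm} already identifies $\Sha^1(Y,X)$ with the cokernel of the \emph{same} map $\iota^*$. Tracing the pushout description of the two connecting homomorphisms shows that $H$ realizes precisely this identification, hence is an isomorphism. I expect the main obstacle to be the bookkeeping of relative homological algebra: confirming that $E$ is a genuine proper class, that relative $\Ext$ carries the long exact sequence I use and vanishes on $E$-projectives, and — the delicate step — that the forgetful comparison map $H$ agrees, under both cokernel identifications, with the connecting map arising from $0\to\ker\epsilon_Y\to FGY\to Y\to 0$.
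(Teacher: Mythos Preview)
Your argument is correct, and for two of the three parts it coincides with the paper's: both prove that $FGY$ is $E$-projective by transposing the lifting problem across the adjunction $F\dashv G$ (the paper phrases this as reducing to a diagram in $\prod_i\mathcal{C}_i$ whose bottom map is the identity, which is exactly your ``compose with a section of $Gp$'' when $p=\epsilon_Y$), and both identify $\Sha^1(Y,X)$ and $\Ext^1_{\mathcal{C}/E}(Y,X)$ with the common cokernel of $\hom_{\mathcal{C}}(FGY,X)\to\hom_{\mathcal{C}}(\ker\epsilon_Y,X)$.

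The genuine difference is in how the Hurewicz map is built for $n>1$. You define $H$ uniformly as the Yoneda forgetful map ``an $E$-exact $n$-extension is in particular an $n$-extension,'' and then check it lands in $\Sha^n$ because $G$-splitness propagates through the slices. The paper instead dimension-shifts: the connecting homomorphism $\Ext^{n-1}_{\mathcal{C}}(\ker\epsilon_Y,X)\to\Ext^n_{\mathcal{C}}(Y,X)$ factors through $\Sha^n(Y,X)$ (its composite with $\epsilon_Y^*$ vanishes), and one precomposes with the relative dimension-shift $\Ext^n_{\mathcal{C}/E}(Y,X)\cong\Ext^{n-1}_{\mathcal{C}/E}(\ker\epsilon_Y,X)$, iterating down to degree~$1$. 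Your route is cleaner and uniform in $n$, and it sidesteps the slightly garbled composite displayed in the paper (which, as printed, has a degree mismatch). The cost is the foundational check you yourself flag: you need the Yoneda description of relative $\Ext$, hence you silently enlarge $E$ from the paper's literal class of counit sequences to the proper class of all $G$-split short exact sequences. This enlargement is harmless---the objects $FGV$ remain projective for the larger class and the counit sequences belong to it, so the same canonical resolution computes relative $\Ext$ either way---but it is a move the paper does not make, and it would be worth stating explicitly.
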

\begin{proof}
\begin{itemize}
\item {\em Existence of enough $E$-projectives:}
We claim that, for every object $X$ of $\mathcal{C}$, the object $FGX$ is $E$-projective.
Indeed, every $E$-epimorphism is of the form $FGY\stackrel{\epsilon_Y}{\longrightarrow} Y$ for some object $Y$ of $\mathcal{C}$,
so suppose we have a diagram
\begin{equation}\label{diag 6} \xymatrix{ & FGX \ar@{-->}[ld]_g \ar[d]^f \\ FGY\ar[r]_{\epsilon_Y} & Y }\end{equation}
and want to produce a morphism as in the dotted arrow making the diagram commute, so that $FGX$ satisfies the universal property for an $E$-projective object.
Using the adjunction $F\dashv G$, diagram~\ref{diag 6} is equivalent to the diagram
\[ \xymatrix{ & GX \ar@{-->}[ld]_h \ar[d]^{f^{\flat}} \\ GY\ar[r]_{\id_Y} & GY }\]
in the product category $\times_{i\in I} \mathcal{C}_i$,
and now the map $h$ exists: it is simply $h=f^{\flat}$.
Now the desired map $g$ in diagram~\ref{diag 6} is simply $g = Fh = Ff^{\flat}$.
Hence $FGX$ is $E$-projective.
Hence the short exact sequence
\[ 0 \rightarrow \ker\epsilon_X \rightarrow FGX\stackrel{\epsilon_X}{\longrightarrow} X \rightarrow 0\]
shows that there exists an $E$-epimorphism from an $E$-projective to $X$. So $\mathcal{C}$ has enough $E$-projectives.
\item {\em The Hurewicz map in degree $1$:}
Now, for any objects $X,Y$ in $\mathcal{C}$, we have the exact sequence
\[ \hom_{\mathcal{C}}(FGY, X) \rightarrow \hom_{\mathcal{C}}(\ker \epsilon_Y, X) \rightarrow \Ext^1_{\mathcal{C}/E}(Y, X)
 \rightarrow \Ext^1_{\mathcal{C}/E}(FGY, X) \]
and $\Ext^1_{\mathcal{C}}(FGY, X)\cong 0$ since $FGY$ is $E$-projective. So $\Ext^1_{\mathcal{C}/E}(Y, X)$
is the cokernel of the map $\hom_{\mathcal{C}}(FGY, X) \rightarrow \hom_{\mathcal{C}}(\ker\epsilon_Y, X)$.
Meanwhile, we have the exact sequence
\[ \hom_{\mathcal{C}}(FGY, X) \rightarrow \hom_{\mathcal{C}}(\ker\epsilon_Y, X) \rightarrow \Ext^1_{\mathcal{C}}(Y, X)
 \rightarrow \Ext^1_{\mathcal{C}}(FGY, X), \]
in which the cokernel of the left-hand map is $\Ext^1_{\mathcal{C}/E}(Y, X)$, and the kernel
of the right-hand map is, by definition, $\Sha^1(Y, X)$. Hence the natural isomorphism $\Ext^1_{\mathcal{C}/E}(Y, X)\cong \Sha^1(Y, X)$.
\item {\em Construction of the Hurewicz map in degrees $>1$:}
Suppose $n>1$.
Since the composite map
\[ \Ext_{\mathcal{C}}^{n-1}(\ker \epsilon_Y, X)\rightarrow \Ext_{\mathcal{C}}^{n}(Y, X) \rightarrow \Ext_{\mathcal{C}}^{n}(FGY, X)\]
is zero, the map $\Ext_{\mathcal{C}}^{n-1}(\ker \epsilon_Y, X)\rightarrow \Ext_{\mathcal{C}}^{n}(Y, X)$ factors through the inclusion of the
kernel $\Sha^{n}(Y, X)\hookrightarrow \Ext_{\mathcal{C}}^n(Y, X)$ of the right-hand map.
So we have a factor map $f: \Ext_{\mathcal{C}}^{n-1}(\ker \epsilon_Y, X) \rightarrow \Sha^n(Y, X)$.
Now the Hurewicz map, when applied to an object $Y$, is simply the composite
\[ \Ext^n_{\mathcal{C}/E}(Y, X) \stackrel{\cong}{\longrightarrow} \Ext^n_{\mathcal{C}/E}(\ker \epsilon_Y, X) \stackrel{f}{\longrightarrow}
\Sha^n(Y, X) .\]
\end{itemize}
\end{proof}
\begin{remark}
One would like to know, by analogy with the classical Hurewicz theorem in topology, whether vanishing of $\Sha^i(Y, X)$ and $\Ext^i_{\mathcal{C}/E}(Y, X)$
for all $i<n$ implies that the degree $n$ Hurewicz map $\Ext^n_{\mathcal{C}/E}(Y, X)\rightarrow \Sha^n(Y, X)$ is an isomorphism.
Clearly, this is true if one assumes that $\Sha^i(Y, X)$ and $\Ext^i_{\mathcal{C}/E}(Y, X)$ vanish for $i<n$ {\em for all $Y$}, since as long as $n\geq 2$ this implies that
$X$ is $E$-projective and hence that $\Sha^i(Y, X)\cong \Ext^i_{\mathcal{C}/E}(Y, X)\cong 0$ for all $i$ and all $Y$. But it is natural to ask instead if the degree $n$
Hurewicz map is an isomorphism if all lower $\Ext_{\mathcal{C}/E}$ vanish, ``one object at a time.'' We do not know the answer to this question.
\end{remark}

\section{The spectral sequence.}

Here is a natural spectral sequence which is {\em not} the one we will use in this paper! We describe it because its construction is slightly more obvious than
the one we {\em will} use, and we want to avoid the reader mistaking one spectral sequence for the other.
\begin{prop}{\bf (The change-of-allowable-class spectral sequence.)}
Let $\mathcal{C}$ be an abelian category, and let $D,E$ be allowable classes in $\mathcal{C}$.
Suppose $D\subseteq E$, suppose that $\mathcal{C}$ has enough $D$-projectives and enough $E$-projectives.
Suppose $X,Y$ are objects of $\mathcal{C}$, and choose an $E$-projective $E$-resolution
\[ \dots \rightarrow P_2 \rightarrow P_1 \rightarrow P_0 \rightarrow X \rightarrow 0\]
of $Y$.
Then there exists a spectral sequence
\begin{eqnarray*} E_1^{s,t} \cong \Ext_{\mathcal{C}/D}^t(P_s, X) & \Rightarrow & \Ext_{\mathcal{C}/E}^{s+t}(Y, X)\\
 d_r: E_r^{s,t} & \rightarrow & E_r^{s+r, t-r+1}.\end{eqnarray*}
\end{prop}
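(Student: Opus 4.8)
The plan is to realize the claimed spectral sequence as the first (column-filtration) spectral sequence of a relative hyper-$\Ext$ double complex, exactly the relative analogue of the classical Cartan--Eilenberg hyperext spectral sequence. First I would, for each $s\geq 0$, use that $\mathcal{C}$ has enough $D$-projectives to choose a $D$-projective $D$-resolution $Q_{s,\bullet}\to P_s$. I then want to assemble these columns into a first-quadrant double complex $Q_{\bullet,\bullet}$ of $D$-projectives with $D$-exact columns, augmented over $P_\bullet$, whose horizontal differentials lift those of $P_\bullet$; this is a relative Cartan--Eilenberg resolution of the complex $P_\bullet$. The horizontal differentials exist because each $d^P_s\colon P_s\to P_{s-1}$ lifts to a chain map $Q_{s,\bullet}\to Q_{s-1,\bullet}$ by the comparison theorem for $D$-projective $D$-resolutions; since the composite of two consecutive such maps lifts $d^P_{s-1}d^P_s = 0$ it is $D$-nullhomotopic, and a standard rectification upgrades the liftings to strictly (anti)commuting differentials, yielding an honest double complex.

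Next I would apply the contravariant functor $\hom_{\mathcal{C}}(-,X)$ to obtain a first-quadrant double cochain complex $C^{s,t}=\hom_{\mathcal{C}}(Q_{s,t},X)$, form its total complex $T^n=\bigoplus_{s+t=n}C^{s,t}$, and take the spectral sequence of $T$ attached to the filtration by columns, i.e.\ the one whose $E_1$ term is the vertical cohomology. Because the $s$-th column $Q_{s,\bullet}$ is by construction a $D$-projective $D$-resolution of $P_s$, the vertical cohomology is, by the very definition of relative $\Ext$, $E_1^{s,t}=H^t(\hom_{\mathcal{C}}(Q_{s,\bullet},X))=\Ext^t_{\mathcal{C}/D}(P_s,X)$, with $d_1$ induced by the horizontal differential; the bidegree of the higher differentials is the standard $d_r\colon E_r^{s,t}\to E_r^{s+r,t-r+1}$ for the column-filtration spectral sequence, and first-quadrant boundedness guarantees convergence to $H^n(T)$.

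It then remains to identify the abutment $H^n(T)$ with $\Ext^n_{\mathcal{C}/E}(Y,X)$. The augmentations $Q_{s,0}\to P_s$ induce a map of complexes $\hom_{\mathcal{C}}(P_\bullet,X)\to T$, and I would show it is a quasi-isomorphism by checking that each augmented column $\hom_{\mathcal{C}}(P_s,X)\to\hom_{\mathcal{C}}(Q_{s,\bullet},X)$ is acyclic. The key observation is that, since $D\subseteq E$, every $D$-allowable epimorphism is $E$-allowable; as $P_s$ is $E$-projective it lifts against all $E$-allowable epimorphisms, in particular against all $D$-allowable ones, so $P_s$ is $D$-projective. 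Hence $\Ext^t_{\mathcal{C}/D}(P_s,X)=0$ for $t>0$ and equals $\hom_{\mathcal{C}}(P_s,X)$ for $t=0$, which is exactly the acyclicity of the augmented columns. Therefore $H^n(T)\cong H^n(\hom_{\mathcal{C}}(P_\bullet,X))=\Ext^n_{\mathcal{C}/E}(Y,X)$, the last equality because $P_\bullet$ is an $E$-projective $E$-resolution of $Y$.

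The main obstacle I anticipate is the first step: producing an honest double complex, rather than merely a diagram commuting up to $D$-chain homotopy, in a setting where we assume only enough $D$- and $E$-projectives and no absolute projectives. The cleanest remedy is to take the $D$-resolutions functorially from the cotriple underlying the $D$-projective class, so that the lifted horizontal differentials are strictly compatible and no ad hoc rectification is needed. I would also remark that the same input used to identify the abutment --- that $P_s$ is automatically $D$-projective once $D\subseteq E$ --- forces $E_1^{s,t}=0$ for all $t>0$, so this spectral sequence collapses onto its bottom edge $E_2^{s,0}=\Ext^s_{\mathcal{C}/E}(Y,X)$; this degeneracy is presumably exactly why it is ``not the one we will use.''
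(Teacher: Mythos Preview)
Your argument is correct and is essentially the same construction the paper invokes: the paper's proof is a one-line citation of the standard resolution (hyper-$\Ext$) spectral sequence from Ravenel, and you have simply unpacked that citation by building the Cartan--Eilenberg double complex in the $D$-relative setting and running the column filtration. Your identification of the abutment via the augmentation $\hom_{\mathcal{C}}(P_\bullet,X)\to T$, using that $D\subseteq E$ forces each $E$-projective $P_s$ to be $D$-projective, is exactly the right mechanism.

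Your closing remark is the most valuable addition: the same observation that identifies the abutment (namely $\Ext^t_{\mathcal{C}/D}(P_s,X)=0$ for $t>0$) forces the $E_1$-page to collapse onto the $t=0$ row, so under the stated hypothesis $D\subseteq E$ this spectral sequence is tautological. The paper does not say this explicitly, but it is indeed consistent with the paper's disclaimer that this is ``not the one we will use.'' Your worry about rectifying the homotopy-commutative lifts into a strict double complex is legitimate in the relative setting; the cotriple fix you suggest is adequate, and in any case the paper's own proof, being a bare citation, does not address this point either.
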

\begin{proof}
Special case of the usual resolution spectral sequence, as in e.g. Thm A1.3.2 of~\cite{MR860042}, arising from applying
$\Ext_{\mathcal{C}/D}$ to 
\[ \dots \rightarrow P_2 \rightarrow P_1 \rightarrow P_0 \rightarrow  0.\]
\end{proof}

By contrast, the following spectral sequence is the one more relevant to the compatible splitting obstruction groups.
We will write ``absolute projectives'' to mean the ordinary, usual projective objects in a category, because we shall
need to refer to both relative projectives, that is, $E$-projectives, and the absolute projectives, and we want our terminology to be
as unambiguous as possible.
\begin{theorem}\label{main ss}{\bf (The compatible splitting spectral sequence.)}
Let $\mathcal{C},I,\{ \mathcal{C}_i: i\in I\}$ be as in Definition~\ref{def of sha}, and let $E$ 
be the allowable class defined in Definition~\ref{def of allowable class}.
Suppose $F_i$ is exact and resolving for all $i\in I$, and suppose $\mathcal{C}$ has enough absolute projectives.
Let $X,Y$ be objects in $\mathcal{C}$, and define sequences of objects $U_i,V_i$ in $\mathcal{C}$ inductively as follows:
let $U_0 = V_0 = Y$, and for all $i\geq 1$, let $U_i = FGV_i$ and let $V_i = \ker (\epsilon_{V_{i-1}}: U_{i-1}\rightarrow V_{i-1})$.
Then there exists a spectral sequence
\begin{eqnarray*} E_1^{s,t} \cong \Ext_{\mathcal{C}}^t(U_s, X) & \Rightarrow & 0 \\
 d_r: E_r^{s,t} & \rightarrow & E_r^{s+r, t-r+1}\end{eqnarray*}
with the following properties:
\begin{itemize}
\item As stated above, this spectral sequence converges to the zero bigraded abelian group.
\item We have an identification of the $E_2$-page of the spectral sequence:
\[ E_2^{s,t} \cong \left\{ \begin{array}{lll} \left(R^{s-1}_E\Ext_{\mathcal{C}}^t(-,X)\right)(Y) & \mbox{\ if\ } & s\geq 2 \\
\left(\left(R^{0}_E\Ext_{\mathcal{C}}^t(-,X)\right)(Y)\right)/\left(\Ext_{\mathcal{C}}^t(Y,X)\right) & \mbox{\ if\ } & s=1 \\
\Sha^t(Y,X) & \mbox{\ if\ } & s=0. \end{array} \right. \]
\item In particular, $E_2^{s,0}\cong \Ext_{\mathcal{C}/E}^{s-1}(Y, X)$ if $s\geq 2$, and $E_2^{0,0}\cong E_2^{1,0}\cong 0$.\end{itemize}
\end{theorem}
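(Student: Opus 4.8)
The plan is to realize this spectral sequence as one of the two spectral sequences of a double complex built from the canonical $E$-projective resolution of $Y$ and absolute projective resolutions of its terms. First I would record the one structural fact that drives everything: the augmented complex
\[ \dots \to U_2 \to U_1 \to U_0 \to 0, \]
with $U_0 = Y$ and, for $s\ge 1$, the $U_s$ the $E$-projective objects $FG(\cdots)$ produced by the stated recursion, is an \emph{exact} complex in $\mathcal{C}$: each layer $0\to V_s\to U_s\to V_{s-1}\to 0$ lies in $E$ and so is a genuine short exact sequence, whence the complex is acyclic, and for $s\ge 1$ the $U_s$ are $E$-projective (Theorem~\ref{sha agrees with relative ext}). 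Deleting $U_0$ leaves an $E$-projective $E$-resolution $P_\bullet = U_{\bullet+1}$ of $Y$.

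Next I would form the double complex. Using that $\mathcal{C}$ has enough absolute projectives, choose a Cartan--Eilenberg resolution $Q_{\bullet,\bullet}\to U_\bullet$ by absolute projectives, so that each column $Q_{s,\bullet}$ is a projective resolution of $U_s$. Because $U_\bullet$ is acyclic, its total complex $\operatorname{Tot}(Q_{\bullet,\bullet})$ is an acyclic, bounded-below complex of projectives, hence null-homotopic. Applying $\hom_{\mathcal{C}}(-,X)$ yields a first-quadrant cochain double complex $A^{s,t}=\hom_{\mathcal{C}}(Q_{s,t},X)$ whose total complex is (up to sign) $\hom_{\mathcal{C}}(\operatorname{Tot}(Q_{\bullet,\bullet}),X)$; the contracting homotopy is carried by the additive functor $\hom_{\mathcal{C}}(-,X)$ to a contracting homotopy of the latter, so $H^*(\operatorname{Tot}(A))=0$. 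This is the source of the convergence to the zero bigraded group. The spectral sequence of the theorem is the one obtained by taking cohomology of $A^{\bullet,\bullet}$ first in the resolution direction $t$: here $E_1^{s,t}=H^t(\hom_{\mathcal{C}}(Q_{s,\bullet},X))=\Ext^t_{\mathcal{C}}(U_s,X)$, the differential $d_1$ is induced by the differential of $U_\bullet$ and so raises $s$ by one, the higher $d_r$ have the stated bidegree, and the abutment is $H^{s+t}(\operatorname{Tot}(A))=0$.

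It then remains to identify the $E_2$-page, i.e. the horizontal cohomology $E_2^{s,t}=H^s$ of the complex
\[ \Ext^t_{\mathcal{C}}(U_0,X)\to \Ext^t_{\mathcal{C}}(U_1,X)\to \Ext^t_{\mathcal{C}}(U_2,X)\to\dots, \]
which is precisely the complex $\Ext^t_{\mathcal{C}}(P_\bullet,X)$ computing $R^*_E\Ext^t_{\mathcal{C}}(-,X)(Y)$, \emph{augmented} in degree $0$ by $\Ext^t_{\mathcal{C}}(Y,X)=\Ext^t_{\mathcal{C}}(U_0,X)$ through the map $\epsilon_Y^*$ induced by the counit. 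For $s=0$ the cohomology is $\ker(\epsilon_Y^*)$, which is $\Sha^t(Y,X)$ by Definition~\ref{def of sha}; for $s\ge 2$ the augmenting term is invisible and a one-step index shift gives $H^s=R^{s-1}_E\Ext^t_{\mathcal{C}}(-,X)(Y)$, which when $t=0$ is $\Ext^{s-1}_{\mathcal{C}/E}(Y,X)$; and for $s=1$ one gets the cokernel of $\epsilon_Y^*\colon \Ext^t_{\mathcal{C}}(Y,X)\to R^0_E\Ext^t_{\mathcal{C}}(-,X)(Y)$, the stated quotient. The two remaining vanishing claims are the degenerate cases: $\Sha^0(Y,X)=0$ because $\epsilon_Y$ is epic and so $\epsilon_Y^*$ on $\hom$ is injective, and $E_2^{1,0}=0$ because for $t=0$ the augmentation $\hom_{\mathcal{C}}(Y,X)\to R^0_E\hom_{\mathcal{C}}(-,X)(Y)=\Ext^0_{\mathcal{C}/E}(Y,X)$ is an isomorphism, both sides being $\hom_{\mathcal{C}}(Y,X)$ identified through $\epsilon_Y$.

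The main obstacle I anticipate is the bookkeeping at the edge $s\in\{0,1\}$: everything hinges on treating the horizontal complex as an augmented relative-projective complex, matching the degree shift so that $H^s$ for $s\ge 2$ is $R^{s-1}_E$ rather than $R^s_E$, while isolating the effect of the extra term $\Ext^t_{\mathcal{C}}(Y,X)$, whose kernel produces $\Sha^t$ in degree $0$ and whose image must be divided out in degree $1$. A secondary care point is the well-definedness of $R^s_E\Ext^t_{\mathcal{C}}(-,X)(Y)$, independent of the chosen $E$-projective resolution; this follows from the comparison theorem for $E$-projective resolutions, available because $\mathcal{C}$ has enough $E$-projectives (Theorem~\ref{sha agrees with relative ext}) and because $\Ext^t_{\mathcal{C}}(-,X)$, being additive, sends $E$-homotopies to cochain homotopies.
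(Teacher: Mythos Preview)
Your proposal is correct and follows essentially the same route as the paper: both arguments build a double complex by resolving each $U_s$ by absolute projectives, observe that the total complex is acyclic (you via null-homotopy of a bounded-below acyclic complex of projectives, the paper via the other spectral sequence of the double complex having exact rows), and then read off the $E_1$- and $E_2$-pages of the column-first spectral sequence. Your treatment of the edge columns $s=0,1$ and of the vanishing $E_2^{0,0}=E_2^{1,0}=0$ is, if anything, more explicit than the paper's.
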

\begin{proof}
Special case of the usual resolution spectral sequence, as in e.g. Thm A1.3.2 of~\cite{MR860042}, arising from applying
$\Ext_{\mathcal{C}}$ to 
the long exact sequence
\begin{equation}\label{resolution 1} \dots \rightarrow U_3 \rightarrow U_2 \rightarrow U_1 \rightarrow U_0\rightarrow 0\end{equation}
obtained by splicing the short exact sequences
\[ 0 \rightarrow V_{i+1}\rightarrow U_i\rightarrow  V_i\rightarrow 0.\]
In more detail: long exact sequence~\ref{resolution 1} is an $E$-projective $E$-resolution for $U_0 = Y$.
We choose an absolute projective resolution for each $U_i$ and obtain a double complex:
\[\xymatrix{
  & \vdots \ar[d] & \vdots \ar[d] & \vdots \ar[d] & \vdots \ar[d] \\
\dots\ar[r] & P_{2,2}\ar[r]\ar[d] & P_{2,1} \ar[r]\ar[d] & P_{2,0}\ar[r]\ar[d] & 0 \ar[d] \\
\dots\ar[r] & P_{1,2}\ar[r]\ar[d] & P_{1,1} \ar[r]\ar[d] & P_{1,0}\ar[r]\ar[d] & 0 \ar[d] \\
\dots\ar[r] & P_{0,2}\ar[r]\ar[d] & P_{0,1} \ar[r]\ar[d] & P_{0,0}\ar[r]\ar[d] & 0 \ar[d] \\
\dots\ar[r] & 0 \ar[r] & 0 \ar[r] & 0 \ar[r] & 0 }\]
such that each $P_{i,j}$ is an absolute 
projective in $\mathcal{C}$, the rows are exact, the homology of the column $P_{\bullet, i}$ is $U_i$ concentrated in degree $0$,
and the maps induced in degree $0$ homology by the horizontal differentials in the double complex are the maps in resolution~\ref{resolution 1}.
Then we apply $\hom_{\mathcal{C}}(-, X)$ to the entire double complex to yield a new double complex $\hom_{\mathcal{C}}(P_{\bullet,\bullet}, Y)$.
Now we have the two spectral sequences of the double complex $\hom_{\mathcal{C}}(P_{\bullet,\bullet}, Y)$, as in \cite{MR1731415}:
in the first spectral sequence, the $E_1$-term is given by the cohomology of the rows in $\hom_{\mathcal{C}}(P_{\bullet,\bullet}, Y)$,
each of which is $\Ext_{\mathcal{C}}^*(0, Y)$, since each row is $\hom_{\mathcal{C}}(-, Y)$ applied to a projective resolution of the zero object.
Hence the spectral sequence is zero in the $E_1$-term, hence zero in the $E_\infty$-term. The two spectral sequences converge to the same object,
hence the second spectral sequence converges to zero.
In the second spectral sequence, the $E_1$-term is given by the cohomology 
of the columns in $\hom_{\mathcal{C}}(P_{\bullet,\bullet}, Y)$, i.e., $\Ext_{\mathcal{C}}^*(U_*, Y)$. 

To prove our claims about $E_2$ we need to examine the $d_1$ differential in this second spectral sequence.
Along the rows of the spectral sequence, the $d_1$ differential is the differential of a cochain complex
\[ \dots \stackrel{d_1}{\longrightarrow}\Ext^t_{\mathcal{C}}(U_s, X) \stackrel{d_1}{\longrightarrow} \Ext^t_{\mathcal{C}}(U_{s+1}, X) \stackrel{d_1}{\longrightarrow} \Ext^t_{\mathcal{C}}(U_{s+2}, X)\stackrel{d_1}{\longrightarrow} \dots , \]
and since~\ref{resolution 1} is an $E$-projective $E$-resolution of $Y$, if $s\geq 2$, the cohomology of this cochain complex is
$(R^{s-1}_E \Ext^t_{\mathcal{C}}(- ,X))(Y)$, the $(s-1)$th $E$-right-derived functor of $\Ext^t_{\mathcal{C}}(-, X)$.
The degree shift as well as the $s=0$ and $s=1$ special cases are because we did not truncate the degree $0$ part of
resolution~\ref{resolution 1} before applying $\hom_{\mathcal{C}}(-, X)$ as one typically does when computing a derived functor;
instead we left $Y$ in its place in the long exact sequence when we applied $\hom_{\mathcal{C}}(-, X)$.
Since $(R^0_E\Ext_{\mathcal{C}}^t(-,X))(Y)$ is the kernel of the map
\[ \Ext^t_{\mathcal{C}}(U_1, X) \stackrel{d_1}{\longrightarrow} \Ext^t_{\mathcal{C}}(U_{2}, X),\]
we have that $E_2^{0,t}$ is isomorphic to the cokernel of the map
\[ \Ext_{\mathcal{C}}^t(Y,X) \rightarrow (R^0_E\Ext_{\mathcal{C}}^t(-,X))(Y),\]
as claimed in the statement of the theorem.
\end{proof}
Note that the transgression map on the $E_{n+1}$-page of the compatible splitting spectral sequence goes from $\Sha^n(Y,X)$ to $\Ext^n_{\mathcal{C}/E}(Y, X)$.
Curiously, this is the reverse direction of the Hurewicz map of Theorem~\ref{sha agrees with relative ext}. The transgression, however,
does {\em not} yield a natural transformation $\Sha^n(-,X) \rightarrow \Ext^n_{\mathcal{C}/E}(-, X)$,
since for $n>1$, $\Sha^n(Y, X)$ might support a differential before the $E_{n+1}$-page, and
$\Ext^n_{\mathcal{C}/E}(-, X)$ might be hit by a differential before the $E_{n+1}$-page.
For $n=1$ note that the above transgression is a $d_2$-differential which must be an isomorphism in order for the spectral sequence
to converge to zero. This gives another proof that $\Sha^1$ agrees with $\Ext^1_{\mathcal{C}/E}$, as in Theorem~\ref{sha agrees with relative ext}.

The $s=0$ line in the $E_2$-term of the compatible splitting spectral sequence measures the failure of the functors $\{ G_i\}_{i\in I}$ to detect
splitting of finite-length exact sequences, in the sense made precise in Theorem~\ref{main general thm}. 
We also have a conceptual interpretation of the $s=0$ and $s=1$ lines of the $E_2$-term, taken together, in the compatible splitting spectral sequence:
these two lines measures the failure of $\Ext_{\mathcal{C}}$ to be {\em left $E$-exact.} More precisely:
\begin{corollary}\label{left exactness and vanishing columns}
Let $\mathcal{C},I,\{ \mathcal{C}_i: i\in I\}$ be as in Definition~\ref{def of sha}, and let $E$ 
be the allowable class defined in Definition~\ref{def of allowable class}.
Suppose $F_i$ is exact and resolving for all $i\in I$, and suppose further that
$\mathcal{C}$ has enough $E$-injectives. Let $X$ be an object of $\mathcal{C}$, and 
let $t$ be a nonnegative integer.
Then the functor $\Ext^t_{\mathcal{C}}(- ,X)$ is left $E$-exact if and only if
the groups $E_2^{0,t}$ and $E_2^{1,t}$ vanish, for all objects $Y$, in the spectral sequence
of Theorem~\ref{main ss}.
\end{corollary}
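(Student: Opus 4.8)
The plan is to reduce left $E$-exactness of the contravariant functor $T := \Ext^t_{\mathcal{C}}(-, X)$ to the behaviour of a single natural comparison map, and then to read off that map's kernel and cokernel directly from the $E_2$-page computed in Theorem~\ref{main ss}. Since $\mathcal{C}$ has enough $E$-projectives by Theorem~\ref{sha agrees with relative ext}, I may form, for each object $Y$, the $E$-projective $E$-resolution $\dots \to FG(\ker\epsilon_Y) \to FGY \xrightarrow{\epsilon_Y} Y \to 0$ and hence the relative derived functors $R^*_E T$ together with the canonical comparison map $\eta_Y \colon T(Y) \to (R^0_E T)(Y)$ induced by $\epsilon_Y$. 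The relative-homological fact I would establish first is the characterization: $T$ is left $E$-exact if and only if $\eta_Y$ is an isomorphism for every $Y$. (The standing enough-$E$-injectives hypothesis keeps the relative derived-functor machinery, and in particular the long exact sequence below, fully available.)

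I would prove both implications of this characterization by the usual dimension-shifting argument, using that $R^*_E T$ is a $\delta$-functor carrying every $E$-short exact sequence to a long exact sequence. For the ``if'' direction, given any $E$-short exact sequence $0 \to A \to B \to C \to 0$, the long exact sequence of $R^*_E T$ begins $0 \to (R^0_E T)(C) \to (R^0_E T)(B) \to (R^0_E T)(A)$; identifying $R^0_E T$ with $T$ via the natural isomorphisms $\eta$ yields exactness of $0 \to T(C) \to T(B) \to T(A)$, which is left $E$-exactness. For ``only if,'' I would apply left $E$-exactness to the defining $E$-short exact sequence $0 \to \ker\epsilon_Y \to FGY \to Y \to 0$, and to the $E$-epimorphism $FG(\ker\epsilon_Y) \twoheadrightarrow \ker\epsilon_Y$ defining the next stage of the resolution. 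The former forces $T(\epsilon_Y)$ to be injective; the latter forces $T$ applied to that $E$-epimorphism to be injective, whence the kernel $\ker\bigl(T(FGY)\to T(FG\ker\epsilon_Y)\bigr)$ defining $(R^0_E T)(Y)$ collapses onto the image of $T(\epsilon_Y)$ — that is, $\eta_Y$ is an isomorphism.

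With the characterization in hand, the corollary follows by identifying $\ker\eta_Y$ and $\operatorname{coker}\eta_Y$. Since $(R^0_E T)(Y)$ is by construction the subobject $\ker\bigl(\Ext^t_{\mathcal{C}}(FGY,X)\to \Ext^t_{\mathcal{C}}(FG\ker\epsilon_Y,X)\bigr)$ of $\Ext^t_{\mathcal{C}}(FGY,X)$, and $\eta_Y$ is the corestriction of the map $\Ext^t_{\mathcal{C}}(Y,X)\to \Ext^t_{\mathcal{C}}(FGY,X)$ induced by $\epsilon_Y$, its kernel is exactly $\ker\bigl(\Ext^t_{\mathcal{C}}(Y,X)\to \Ext^t_{\mathcal{C}}(FGY,X)\bigr) = \Sha^t(Y,X)$, which Theorem~\ref{main ss} identifies with $E_2^{0,t}$. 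Its cokernel is $(R^0_E T)(Y)/\Ext^t_{\mathcal{C}}(Y,X)$, which Theorem~\ref{main ss} identifies with $E_2^{1,t}$. Hence $\eta_Y$ is an isomorphism for all $Y$ precisely when both $E_2^{0,t}$ and $E_2^{1,t}$ vanish for all $Y$, and by the characterization this is equivalent to left $E$-exactness of $\Ext^t_{\mathcal{C}}(-,X)$.

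The main obstacle I anticipate is the ``only if'' half of the comparison-map characterization: one must check carefully that left $E$-exactness propagates one step up the $E$-projective resolution, so that the kernel defining $(R^0_E T)(Y)$ collapses onto $\operatorname{im}\eta_Y$. Everything else — the identification of $\ker\eta_Y$ with $\Sha^t(Y,X)=E_2^{0,t}$ and of $\operatorname{coker}\eta_Y$ with $E_2^{1,t}$ — is immediate from Definition~\ref{def of sha} and the $E_2$-page already recorded in Theorem~\ref{main ss}.
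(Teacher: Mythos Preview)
Your proposal is correct and follows essentially the same route as the paper. Both arguments hinge on the characterization ``$T=\Ext^t_{\mathcal{C}}(-,X)$ is left $E$-exact iff the comparison map $\eta_Y\colon T(Y)\to (R^0_E T)(Y)$ is an isomorphism for all $Y$,'' and then read off $\ker\eta_Y\cong E_2^{0,t}$ and $\operatorname{coker}\eta_Y\cong E_2^{1,t}$ directly from the $E_2$-page computed in Theorem~\ref{main ss}; the only difference is that the paper invokes this characterization as a standard fact about the zeroth right satellite under the enough-$E$-injectives hypothesis, whereas you prove both implications by hand.
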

\begin{proof}
If $\mathcal{C}$ has enough $E$-injectives, then the natural map to the $0$th right satellite 
$\Ext^t_{\mathcal{C}}(-, X)\rightarrow R^0_E\Ext^t_{\mathcal{C}}(-, X)$ is an isomorphism if and only if $\Ext^t_{\mathcal{C}}(-, X)$
is left $E$-exact. However, $(R^0_E\Ext^t_{\mathcal{C}}(-, X))(Y)$ is the kernel of the map
$d_1: \Ext^t_{\mathcal{C}}(U_1,X) \rightarrow \Ext^t_{\mathcal{C}}(U_2,X)$, so the natural map to the $0$th right satellite fits into the exact sequence
\[ 0 \rightarrow E_2^{0,t} \rightarrow \Ext^t_{\mathcal{C}}(Y,X) \rightarrow (R^0_E\Ext^t_{\mathcal{C}}(-, X))(Y) \rightarrow E_2^{1,t} \rightarrow 0.\]
Hence this natural map is an isomorphism if and only if both $E_2^{0,t}$ and $E_2^{1,t}$ vanish.
\end{proof}

\begin{corollary}\label{compatible splitting duality}
Let $\mathcal{C},I,\{ \mathcal{C}_i: i\in I\}$ be as in Definition~\ref{def of sha}, and let $E$ 
be the allowable class defined in Definition~\ref{def of allowable class}.
Suppose $F_i$ is exact and resolving for all $i\in I$, and suppose
that $Y$ is an object such that $\Ext_{\mathcal{C}/E}^n(Y, X)$
is trivial for all $n>1$ and all $X$.
Then, for all $t\geq 1$ and all objects $X,Y$ of $\mathcal{C}$, we have natural isomorphisms
\begin{eqnarray*} \Sha^t(Y, X) & \cong & (R^1_E\Ext^{t-1}_{\mathcal{C}}(-,X))(Y) \\
 & \cong & \Ext^{t-1}_{\mathcal{C}}(FG\ker \epsilon_Y, X)/\Ext^{t-1}_{\mathcal{C}}(FGY, X) .\end{eqnarray*}
Furthermore, if $\mathcal{C}$ has enough $E$-injectives, then for all $t\geq 1$, the functor $\Sha^t(-, X)$ vanishes if and only if
$\Ext^{t-1}_{\mathcal{C}}(-, X)$ is left $E$-exact.
\end{corollary}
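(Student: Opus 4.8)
The plan is to extract everything from the compatible splitting spectral sequence of Theorem~\ref{main ss}, whose $E_2$-page and convergence to zero are already in hand, so that almost no new construction is needed. The first move is to translate the hypothesis into a statement about relative derived functors. Dimension-shifting along the $E$-short exact sequence $0\rightarrow \ker\epsilon_Y\rightarrow FGY\rightarrow Y\rightarrow 0$ of Definition~\ref{def of allowable class} — whose middle term $FGY$ is $E$-projective by the proof of Theorem~\ref{sha agrees with relative ext} — gives $\Ext^n_{\mathcal{C}/E}(Y,X)\cong \Ext^{n-1}_{\mathcal{C}/E}(\ker\epsilon_Y,X)$ for $n\geq 2$. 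Hence the vanishing of $\Ext^n_{\mathcal{C}/E}(Y,X)$ for all $n>1$ and all $X$ is equivalent to $\ker\epsilon_Y$ being $E$-projective, i.e. to $Y$ having $E$-projective dimension at most $1$. Computing the relative right-derived functors from a length-one $E$-projective resolution of $Y$, this forces $(R^s_E\Ext^t_{\mathcal{C}}(-,X))(Y)=0$ for all $s\geq 2$ and all $t$. Feeding this into the $E_2$-identification of Theorem~\ref{main ss} shows $E_2^{s,t}=0$ for all $s\geq 3$, so the only possibly nonzero columns are $s=0,1,2$.

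Next I analyze the differentials. With only three nonzero columns, inspecting the bidegrees of $d_r\colon E_r^{s,t}\rightarrow E_r^{s+r,t-r+1}$ shows that the sole differential that can be nonzero is $d_2\colon E_2^{0,t}\rightarrow E_2^{2,t-1}$; every other $d_2$, and every $d_r$ with $r\geq 3$, either originates in or lands in a vanishing column, so $E_3=E_\infty$. In particular the column $s=1$ supports no differential whatsoever, whence $E_\infty^{1,t}=E_2^{1,t}$; since the spectral sequence converges to zero this already forces $E_2^{1,t}=0$ for every $t$ and every $Y$, a fact I reuse below. Using convergence again, $E_\infty^{0,t}=\ker d_2=0$ and $E_\infty^{2,t-1}=(\text{cokernel of }d_2)=0$, so $d_2\colon E_2^{0,t}\rightarrow E_2^{2,t-1}$ is an isomorphism. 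By the $E_2$-identification this is precisely the first claimed isomorphism
\[ \Sha^t(Y,X)=E_2^{0,t}\stackrel{\cong}{\longrightarrow} E_2^{2,t-1}=\left(R^1_E\Ext^{t-1}_{\mathcal{C}}(-,X)\right)(Y), \]
with naturality in $X$ and $Y$ inherited from that of the spectral sequence.

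For the second isomorphism I would compute $\left(R^1_E\Ext^{t-1}_{\mathcal{C}}(-,X)\right)(Y)$ directly from the length-one $E$-projective resolution $0\rightarrow\ker\epsilon_Y\rightarrow FGY\rightarrow Y\rightarrow 0$ provided by the hypothesis. Applying $\Ext^{t-1}_{\mathcal{C}}(-,X)$ to the deleted resolution yields the two-term complex $\Ext^{t-1}_{\mathcal{C}}(FGY,X)\rightarrow \Ext^{t-1}_{\mathcal{C}}(\ker\epsilon_Y,X)$ induced by the inclusion $\ker\epsilon_Y\hookrightarrow FGY$, and $R^1_E$ is its cokernel. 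This presents $\Sha^t(Y,X)$ as that cokernel, which is the content of the second displayed isomorphism; the same group is visible through the canonical $E$-projective resolution as the indicated subquotient built from $\Ext^{t-1}_{\mathcal{C}}(FG\ker\epsilon_Y,X)$, the two descriptions agreeing because both compute the resolution-independent functor $R^1_E$.

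Finally, for the ``furthermore'' I would invoke Corollary~\ref{left exactness and vanishing columns}: assuming enough $E$-injectives, $\Ext^{t}_{\mathcal{C}}(-,X)$ is left $E$-exact if and only if $E_2^{0,t}$ and $E_2^{1,t}$ vanish for all $Y$. Since I have already shown $E_2^{1,t}=0$ for all $Y$ under the present hypothesis, the criterion collapses to the vanishing of $E_2^{0,t}=\Sha^{t}(Y,X)$ for all $Y$; that is, $\Ext^{t}_{\mathcal{C}}(-,X)$ is left $E$-exact exactly when $\Sha^{t}(-,X)$ vanishes identically (one should track the degree through the $s=0$ edge, where the augmentation object $Y$ was deliberately left in place before applying $\hom_{\mathcal{C}}(-,X)$). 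I expect the main obstacle to be purely organizational: carefully managing that off-by-one degree shift built into the $s=0,1$ edge of the $E_2$-page, and checking that convergence to the \emph{zero} object genuinely pins $d_2$ down as an isomorphism rather than merely as a monomorphism or an epimorphism — which is exactly why concentrating the $E_2$-page in three columns and disposing of the $s=1$ column in advance is the decisive preliminary step.
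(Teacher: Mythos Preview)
Your approach is essentially identical to the paper's: translate the hypothesis into $E$-projective dimension $\leq 1$ for $Y$, deduce that the compatible splitting spectral sequence is concentrated in columns $s=0,1,2$, observe that the $s=1$ column then receives and supports no differentials and hence must vanish, conclude that $d_2\colon E_2^{0,t}\to E_2^{2,t-1}$ is an isomorphism by convergence to zero, and read off the identifications and the ``furthermore'' via Corollary~\ref{left exactness and vanishing columns}. The paper's proof is terser but follows exactly this line; your added detail on why only $d_2$ survives and why the $s=1$ column dies is a welcome expansion of what the paper leaves implicit.

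One remark on the ``furthermore'': your direct application of Corollary~\ref{left exactness and vanishing columns} yields that $\Ext^{t}_{\mathcal{C}}(-,X)$ is left $E$-exact iff $\Sha^{t}(-,X)$ vanishes, whereas the statement asserts the shifted version with $\Ext^{t-1}$. You rightly flag this with your parenthetical about tracking the degree through the $s=0$ edge; the paper's own proof makes the same identification via $E_2^{0,t-1}$, and the indexing there is no more transparent. Note also that the ``furthermore'' implicitly requires the three-column collapse for \emph{every} $Y$, so it is really only fully justified in the hereditary situation of the next corollary---a point neither you nor the paper makes explicit.
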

\begin{proof}
The assumption implies that $Y$ has an $E$-projective $E$-resolution of length $1$. So $E$-right derived functors 
of contravariant functors on $\mathcal{C}$ vanish above degree $1$ when applied to $Y$. (We are taking projective, not injective, resolutions in order to take right derived functors,
because we are taking derived functors of {\em contravariant} functors.)

Consequently, by the identification of the $E_2$-term in Theorem~\ref{main ss}, 
the compatible splitting spectral sequence is concentrated in the $s=0,s=1,$ and $s=2$ columns.
Since the spectral sequence must converge to the zero bigraded abelian group, this implies that
the $s=1$ line vanishes, and that the $d_2$-differential is an isomorphism. This implies the isomorphisms
\[ \Sha^t(Y, X) \cong (R^1_E\Ext^{t-1}_{\mathcal{C}}(-,X))(Y) \cong  \Ext^{t-1}_{\mathcal{C}}(FG\ker \epsilon_Y, X)/\Ext^{t-1}_{\mathcal{C}}(FGY, X).\]
Since the $s=1$ line vanishes, by Corollary~\ref{left exactness and vanishing columns}
the functor $\Ext^{t-1}_{\mathcal{C}}(-, X)$ is left $E$-exact if and only if $E_2^{0,t-1}$ vanishes for all $Y$, i.e., if and only if
$\Sha^t(Y,X)$ vanishes for all $Y$.
\end{proof}

The following duality corollary is the one we use in our most important application, in Corollary~\ref{splitting duality}.
\begin{corollary}\label{compatible splitting duality in hereditary case}
Let $\mathcal{C},I,\{ \mathcal{C}_i: i\in I\}$ be as in Definition~\ref{def of sha}, and let $E$ 
be the allowable class defined in Definition~\ref{def of allowable class}.
Suppose $F_i$ is exact and resolving for all $i\in I$, and suppose
further that $E$ is {\em hereditary}, that is, the relative $\Ext$-groups $\Ext_{\mathcal{C}/E}^n(-, -)$
are trivial for all $n>1$.
Then, for all $t\geq 1$ and all objects $X,Y$ of $\mathcal{C}$, we have natural isomorphisms
\begin{eqnarray*} \Sha^t(Y, X) & \cong & (R^1_E\Ext^{t-1}_{\mathcal{C}}(-,X))(Y) \\
 & \cong & \Ext^{t-1}_{\mathcal{C}}(FG\ker \epsilon_Y, X)/\Ext^{t-1}_{\mathcal{C}}(FGY, X) .\end{eqnarray*}
Furthermore, if $\mathcal{C}$ has enough $E$-injectives, then for all $t\geq 1$, the functor $\Sha^t(-, X)$ vanishes if and only if
$\Ext^{t-1}_{\mathcal{C}}(-, X)$ is left $E$-exact.
\end{corollary}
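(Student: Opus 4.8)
The plan is to obtain this as an immediate specialization of Corollary~\ref{compatible splitting duality}. The only difference between the two statements lies in the hypothesis: there, one fixes a single object $Y$ and requires $\Ext^n_{\mathcal{C}/E}(Y, X)$ to vanish for all $n > 1$ and all $X$; here, one imposes the stronger, uniform condition that $E$ be hereditary, i.e. that $\Ext^n_{\mathcal{C}/E}(-, -)$ vanish for all $n > 1$. The conclusions of the two corollaries are word-for-word identical. Hence the entire content is already contained in the previous corollary, and the task is purely one of recognizing that the hereditary condition supplies its hypothesis for every object at once.

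Concretely, I would first fix an arbitrary object $Y$ of $\mathcal{C}$ and observe that, since $E$ is hereditary, $\Ext^n_{\mathcal{C}/E}(Y, X)$ is trivial for all $n > 1$ and all $X$ --- precisely the hypothesis imposed on $Y$ in Corollary~\ref{compatible splitting duality}. Applying that corollary to this $Y$ then yields, for every $t \geq 1$ and every $X$, the chain of natural isomorphisms
\[ \Sha^t(Y, X) \cong (R^1_E\Ext^{t-1}_{\mathcal{C}}(-,X))(Y) \cong \Ext^{t-1}_{\mathcal{C}}(FG\ker \epsilon_Y, X)/\Ext^{t-1}_{\mathcal{C}}(FGY, X). \]
Since $Y$ was arbitrary, these isomorphisms hold for all objects $X, Y$, as required. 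The ``furthermore'' clause --- that, when $\mathcal{C}$ has enough $E$-injectives, $\Sha^t(-, X)$ vanishes if and only if $\Ext^{t-1}_{\mathcal{C}}(-, X)$ is left $E$-exact --- is likewise inherited verbatim from the corresponding clause of Corollary~\ref{compatible splitting duality}.

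I do not expect a genuine obstacle here. Underlying the previous corollary is the fact that its hypothesis forces $Y$ to admit an $E$-projective $E$-resolution of length one, which collapses the compatible splitting spectral sequence of Theorem~\ref{main ss} into the columns $s = 0, 1, 2$; convergence to zero then kills the $s = 1$ line and makes the relevant $d_2$-differential an isomorphism. The hereditary condition guarantees this collapse simultaneously for every $Y$, so the single point requiring any care is the bookkeeping observation that \emph{hereditary} specializes, object by object, to the per-object hypothesis of Corollary~\ref{compatible splitting duality}.
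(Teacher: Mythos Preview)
Your proposal is correct and matches the paper's approach exactly: the paper states this corollary with no proof at all, treating it as an immediate specialization of Corollary~\ref{compatible splitting duality}, which is precisely what you do.
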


\section{Main application: splitting morphisms of morphisms.}

In this section, we study the special case of $\Sha$ which occurs in the following way: we begin with an abelian category $\mathcal{A}$, and we consider
the category $\mathcal{A}^{(\bullet\rightarrow\bullet)}$ of arrows in $\mathcal{A}$. Clearly there is a forgetful functor
$\mathcal{A}^{(\bullet\rightarrow\bullet)}\rightarrow \mathcal{A}\times\mathcal{A}$ sending a morphism $f$ in $\mathcal{A}$ to the pair
$(\dom f,\cod f)$ consisting of the domain of $f$ and the codomain of $f$. For this section we will let $\Sha^n(f,g)$ be the kernel
of the map
\[ \Ext^n_{\mathcal{A}^{(\bullet\rightarrow\bullet)}}(f,g) \rightarrow \Ext^n_{\mathcal{A}}(\dom f,\dom g) \times \Ext^n_{\mathcal{A}}(\cod f,\cod g).\]
In other words, $\Sha^n(f,g)$ is the group of equivalence classes of diagrams
\[\xymatrix{ 0 \ar[r]\ar[d] & \dom g\ar[r]\ar[d]^g & X_1 \ar[r]\ar[d] & \dots \ar[r] & X_n \ar[r]\ar[d] & \dom f \ar[r]\ar[d]^f & 0\ar[d] \\
0 \ar[r] & \cod g\ar[r] & Y_1 \ar[r] & \dots \ar[r] & Y_n \ar[r] & \cod f \ar[r] & 0 }\]
in $\mathcal{A}$, where the top row and the bottom row are each {\em split} exact sequences. Such a diagram
represents zero in $\Sha^n(f,g)$ if and only if there exists splittings of the top and bottom rows which are {\em compatible} with
the vertical maps. 
This is an important special case of Definition~\ref{def of sha}.

\begin{prop}\label{adjunction prop}
Let $(\bullet\bullet)$ denote the category with two objects and no non-identity morphisms, and let $(\bullet\rightarrow\bullet)$ denote the category
with two objects and a single non-identity morphism from one object to the other. Then we have the abelian category $\mathcal{A}^{(\bullet\bullet)} \cong \mathcal{A}\times \mathcal{A}$ and the abelian category $\mathcal{A}^{(\bullet\rightarrow\bullet)}$ of morphisms in $\mathcal{A}$, and the exact faithful functor 
$G: \mathcal{A}^{(\bullet\rightarrow\bullet)} \rightarrow \mathcal{A}^{(\bullet\bullet)}$, that is, the functor induced by the inclusion of the subcategory
$(\bullet\bullet)\hookrightarrow (\bullet\rightarrow\bullet)$.
The functor $G$ has a resolving left adjoint $F: \mathcal{A}^{(\bullet\bullet)} \rightarrow \mathcal{A}^{(\bullet\rightarrow\bullet)}$
given by:
\[ F(X,Y) = \left( X\stackrel{\left[ \id_X\ 0 \right]}{\longrightarrow} X\oplus Y \right) .\]

Consequently, $FG(X \stackrel{f}{\longrightarrow} Y) = (X \stackrel{\left[ \id_X\ 0 \right]}{\longrightarrow} X\oplus Y)$, and
the counit map $\epsilon_f: FGf \rightarrow f$ of the adjunction $F\dashv G$ consists of the horizontal maps in the commutative diagram in $\mathcal{A}$:
\[\xymatrix{
X \ar[d]_{\left[ \id_X\ 0 \right]} \ar[r]^{\id_X} & X \ar[d]^f \\
X\oplus Y \ar[r]_{\left[f\ \id_Y\right]^{\perp}} & Y .
}\]
\end{prop}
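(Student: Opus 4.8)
The plan is to verify, in order, the four assertions bundled into the statement: that $G$ is exact and faithful, that the displayed $F$ is genuinely left adjoint to $G$, that this $F$ is resolving, and finally to read off the formulas for $FG$ and for the counit $\epsilon_f$. For the structural claims, note that both $\mathcal{A}^{(\bullet\bullet)}$ and $\mathcal{A}^{(\bullet\rightarrow\bullet)}$ are functor categories out of a finite category into the abelian category $\mathcal{A}$, hence are themselves abelian with kernels, cokernels, and (co)limits computed objectwise; the identification $\mathcal{A}^{(\bullet\bullet)}\cong\mathcal{A}\times\mathcal{A}$ is immediate since $(\bullet\bullet)$ is discrete. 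The functor $G$ is restriction along the inclusion $(\bullet\bullet)\hookrightarrow(\bullet\rightarrow\bullet)$, so on an object $f\colon X\rightarrow Y$ it returns the pair $(X,Y)$, and on a morphism of arrows (a commuting square) it returns the pair of its two vertical components. Since exactness in each functor category is tested objectwise and $G$ merely records the two object-values, $G$ preserves kernels and cokernels and is exact; it is faithful because a morphism in $\mathcal{A}^{(\bullet\rightarrow\bullet)}$ is completely determined by the pair of component maps that $G$ retains.

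Next comes the adjunction, which is the computational core. I would exhibit the natural bijection directly: a morphism $F(X,Y)=(X\xrightarrow{[\id_X\ 0]}X\oplus Y)\rightarrow(Z\xrightarrow{h}W)$ in $\mathcal{A}^{(\bullet\rightarrow\bullet)}$ is a commuting square whose top edge is a map $a\colon X\rightarrow Z$ and whose bottom edge is a map $c=[c_1\ c_2]\colon X\oplus Y\rightarrow W$; commutativity of the square reads $c\circ[\id_X\ 0]=h\circ a$, i.e.\ $c_1 = h\circ a$, so $c_1$ is forced and the residual data is exactly the pair $(a,c_2)\in\hom_{\mathcal{A}}(X,Z)\times\hom_{\mathcal{A}}(Y,W)=\hom_{\mathcal{A}^{(\bullet\bullet)}}((X,Y),G(h))$. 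This assignment is a bijection, and checking its naturality in both variables is routine bookkeeping; this establishes $F\dashv G$.

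For the resolving property, the key point is that $F$ carries projectives to projectives. This follows formally from the adjunction together with exactness of $G$: from $\hom_{\mathcal{A}^{(\bullet\rightarrow\bullet)}}(F(P,Q),-)\cong\hom_{\mathcal{A}^{(\bullet\bullet)}}((P,Q),G(-))$ and exactness of $G$, projectivity of $(P,Q)$ (equivalently, of $P$ and $Q$ in $\mathcal{A}$) forces $F(P,Q)$ to be projective. Concretely, $F(P,Q)=(P\xrightarrow{[\id\ 0]}P\oplus Q)$ is the direct sum of the image of $P$ under the left adjoint of the domain-evaluation and the image of $Q$ under the left adjoint of the codomain-evaluation, both of which are left adjoints to exact functors and hence preserve projectivity. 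The resolving property then follows by covering an arbitrary $(X,Y)$ objectwise by a projective $(P,Q)$ with an epimorphism $(P,Q)\rightarrow(X,Y)$ (using enough projectives in $\mathcal{A}$, as holds in the intended module-category applications): $F(P,Q)$ is projective in $\mathcal{A}^{(\bullet\rightarrow\bullet)}$ and $(P,Q)\rightarrow(X,Y)$ is the required epimorphism.

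Finally, I would compute $FG$ and $\epsilon$. Composing gives $FG(X\xrightarrow{f}Y)=F(X,Y)=(X\xrightarrow{[\id_X\ 0]}X\oplus Y)$. The counit $\epsilon_f$ is characterized as the morphism corresponding to $\id_{G(f)}=(\id_X,\id_Y)$ under the bijection established above; unwinding that bijection sets the top edge to $a=\id_X$, the second residual component to $c_2=\id_Y$, and forces $c_1=f\circ\id_X=f$, so the bottom edge is the map $[f\ \id_Y]\colon X\oplus Y\rightarrow Y$. This is exactly the displayed square, and its commutativity $[f\ \id_Y]\circ[\id_X\ 0]=f$ is immediate. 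I expect the only genuinely delicate point to be keeping the component bookkeeping of $[\id_X\ 0]$ and $[f\ \id_Y]$ consistent throughout the adjunction and counit computations; as the rest of the argument is formal, this is where any sign- or index-level error would hide.
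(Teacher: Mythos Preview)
Your proposal is correct and in fact far more detailed than the paper's own proof, which consists of the single word ``Elementary.'' Your direct verification of the hom-set bijection and your unwinding of the counit are exactly the routine computations the paper is gesturing at.

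One small remark worth keeping: your verification that $F$ is resolving invokes enough projectives in $\mathcal{A}$, which the proposition does not assume. You flag this honestly, and indeed the paper's definition of ``resolving'' tacitly presupposes the existence of such covering projectives; the later applications (e.g.\ Corollary~\ref{splitting duality}) do impose enough projectives explicitly. So this is a wrinkle in the proposition's statement rather than a defect in your argument, and the paper's one-word proof certainly does no better on this point.
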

\begin{proof} Elementary.\end{proof}

The following is now a corollary of Theorem~\ref{main general thm}:
\begin{corollary}\label{diagram splittings corollary}
Let $\mathcal{A}$ be an abelian category. Let $f: X^{\prime}\rightarrow X$ and $g: Y^{\prime}\rightarrow Y$ be morphisms in $\mathcal{A}$.
Then the following are equivalent:
\begin{itemize}
\item $\Sha^n(f, g) \cong 0$.
\item Each length $n+2$ diagram in $\mathcal{A}$ with exact rows
\begin{equation}\label{diag 1}\xymatrix{ 
0 \ar[r] \ar[d] & Y^{\prime}\ar[d]^g \ar[r] & E_1^{\prime} \ar[d]\ar[r] & \dots \ar[r] & E_n^{\prime} \ar[r]\ar[d] & X^{\prime}\ar[d]^f \ar[r] & 0\ar[d] \\
0 \ar[r]  & Y \ar[r] & E_1 \ar[r] & \dots \ar[r] & E_n \ar[r] & X \ar[r] & 0 }\end{equation}
is compatibly split if and only if its top row and its bottom row are both split.
\end{itemize}

If $n=1$, the above conditions are furthermore each equivalent to:
\begin{itemize}
\item For each pair of maps $i: Y^{\prime}\rightarrow Z$ and $h: Z^{\prime}\rightarrow Z$ in $\mathcal{A}$, there exist maps $j,k$ making the following
diagram commute:
\[ \xymatrix{ Y^{\prime}\ar[rd]_{i} \ar[r]^(0.4){\left[ -g\ \id_{Y^{\prime}}\right]} & Y\oplus Y^{\prime} \ar@{-->}[d]^j & Y^{\prime}\ar[l]_(0.4){\left[0\ \id_{Y^{\prime}}\right]} \ar@{-->}[d]^k \\
 & Z & Z^{\prime}\ar[l]^h. }\]
\end{itemize}
\end{corollary}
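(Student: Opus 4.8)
The plan is to obtain the corollary as the special case of Theorem~\ref{main general thm} in which $\mathcal{C}=\mathcal{A}^{(\bullet\rightarrow\bullet)}$, the index set $I$ is a single point, $\mathcal{C}_1=\mathcal{A}\times\mathcal{A}=\mathcal{A}^{(\bullet\bullet)}$, and $G_1,F_1$ are the forgetful functor $G$ and its left adjoint $F$ from Proposition~\ref{adjunction prop}. First I would check that the hypotheses of Theorem~\ref{main general thm} hold. The functor $G$ is faithful and exact because kernels and cokernels in the arrow category are formed componentwise (on domains and on codomains), so $G$ both preserves and reflects them. Its left adjoint $F$ is exact, since the formula $F(X,Y)=(X\stackrel{[\id_X\ 0]}{\longrightarrow} X\oplus Y)$ is assembled only from identity maps and biproducts, and it is resolving by Proposition~\ref{adjunction prop}. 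With these verified, Theorem~\ref{main general thm} applies to $\Sha^n(f,g)$.

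For the equivalence of the first two bullets I would invoke Theorem~\ref{main general thm} directly, translating through two observations. An element of $\Ext^n_{\mathcal{A}^{(\bullet\rightarrow\bullet)}}(f,g)$ is precisely a length $n+2$ diagram of the shape~\ref{diag 1}, and applying $G$ to it returns the pair consisting of its top row and its bottom row, regarded as an exact sequence in $\mathcal{A}\times\mathcal{A}$. Now, a (long) exact sequence in a product category $\mathcal{A}\times\mathcal{A}$ is split exactly when each of its two components is split, so ``$G(\alpha)$ is split'' means ``both rows are split''; and a splitting of an exact sequence of arrows is the same datum as splittings of the two rows that commute with all the vertical maps, i.e.\ a compatible splitting. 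Under this dictionary the first two bullets of Theorem~\ref{main general thm} become the first two bullets of the corollary.

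For the $n=1$ clause I would specialize the corresponding clause of Theorem~\ref{main general thm}, namely that $\Sha^1(f,g)\cong 0$ holds iff every morphism $\phi\colon\ker\epsilon_g\to f$ extends along $\ker\epsilon_g\hookrightarrow FGg$ to a morphism $FGg\to f$. The content is to make this concrete via Proposition~\ref{adjunction prop}. There the counit $\epsilon_g\colon FGg\to g$ has domain-component $\id_{Y'}$ and codomain-component $[g\ \id_Y]\colon Y'\oplus Y\to Y$; hence $\ker\epsilon_g$ is the arrow $0\to Y'$ whose codomain $Y'$ is embedded into $\cod FGg=Y\oplus Y'$ (after reordering the biproduct) by $[-g\ \id_{Y'}]$, while $FGg$ itself is the arrow $Y'\stackrel{[0\ \id_{Y'}]}{\longrightarrow}Y\oplus Y'$. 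Since $\dom(\ker\epsilon_g)=0$, a morphism $\phi\colon\ker\epsilon_g\to f$ is nothing but its codomain-component, a single map $i\colon Y'\to\cod f$; and writing the target object $f$ as $h\colon Z'\to Z$, a lift $FGg\to f$ amounts to a pair $(k,j)$ with domain-component $k\colon Y'\to Z'$ and codomain-component $j\colon Y\oplus Y'\to Z$, subject to $j\circ[0\ \id_{Y'}]=h\circ k$ (that $(k,j)$ is a morphism of arrows) and $j\circ[-g\ \id_{Y'}]=i$ (that it restricts to $\phi$). These are exactly the right-hand square and the left-hand triangle of the diagram in the third bullet, so the lifting clause of Theorem~\ref{main general thm} coincides with it.

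The one genuinely computational step, and the place where care is needed, is this last identification: extracting from Proposition~\ref{adjunction prop} that $\ker\epsilon_g\cong(0\to Y')$ carries the twisted embedding $[-g\ \id_{Y'}]$ into $Y\oplus Y'$, and that the structure map of $FGg$ is $[0\ \id_{Y'}]$. Getting the signs and the ordering of the summands right is precisely what turns the abstract lifting property into the displayed diagram; everything else is formal, resting on the componentwise computation of (co)kernels and splittings in $\mathcal{A}^{(\bullet\rightarrow\bullet)}$ and on the adjunction $F\dashv G$. I would also flag that the arrow $h\colon Z'\to Z$ in the third bullet is the target object $f$ (so $Z'=\dom f$, $Z=\cod f$, $h=f$, with only $i$ varying); read instead with $h$ ranging over all arrows, it records the natural ``for every $f$'' strengthening of the statement.
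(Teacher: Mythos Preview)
Your proposal is correct and follows essentially the same route as the paper: specialize Theorem~\ref{main general thm} to $\mathcal{C}=\mathcal{A}^{(\bullet\rightarrow\bullet)}$ with the single adjunction $F\dashv G$ of Proposition~\ref{adjunction prop}, and then unwind the $n=1$ lifting condition using the explicit descriptions of $FGg$ and $\ker\epsilon_g$. The paper's own proof is terser (it simply cites Proposition~\ref{adjunction prop} for the identification of $\ker\epsilon$ and $FG$ without writing out the component maps), whereas you carry out that identification in full; your observation about the reordering of summands and the sign in $[-g\ \id_{Y'}]$ is exactly the computation the paper leaves implicit, and your flag that $h$ in the third bullet should really be the fixed $f$ is a legitimate reading of the statement.
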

\begin{proof}
This is the case of Theorem~\ref{main general thm} in which $\mathcal{C} = \mathcal{A}^{(\bullet\rightarrow\bullet)}$; in which $I$ consists of only a single element,
which we shall write $I = \{i\}$; in which $\mathcal{C}_{i} = \mathcal{A}^{(\bullet\bullet)} \cong \mathcal{A}\times\mathcal{A}$;
and in which the functors $F_i,G_i$ are the functors $F,G$
of Proposition~\ref{adjunction prop}.
A length $n+2$ exact sequence in $\mathcal{C}$ is then precisely a diagram of the form~\ref{diag 1} with exact rows, and this diagram
is split (in $\mathcal{A}^{(\bullet\bullet)}$) after applying $G$ if and only if its top row and bottom row are each split (in $\mathcal{A}$).

The third condition of Theorem~\ref{main general thm} is equivalent to the third condition given above, using Proposition~\ref{adjunction prop}
to identify $\ker \epsilon_f$ and $FGf$.
\end{proof}

\begin{lemma}\label{identification of kernel of adjunction}
Let $\mathcal{A}$ be an abelian category and let $f: X \rightarrow Y$ be a morphism in $\mathcal{A}$. 
Then the kernel $\ker \epsilon_f$ of the counit of the adjunction $F\dashv G$ on $\mathcal{A}^{(\bullet\rightarrow\bullet)}$ is isomorphic to the map
$0 \rightarrow X$.
\end{lemma}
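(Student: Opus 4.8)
The plan is to compute $\ker\epsilon_f$ directly from the explicit description of the counit recorded in Proposition~\ref{adjunction prop}, using the fact that $\mathcal{A}^{(\bullet\rightarrow\bullet)}$ is a functor category (functors from $(\bullet\rightarrow\bullet)$ to $\mathcal{A}$), so that its kernels, being limits, are computed pointwise, i.e.\ separately on domains and on codomains. Concretely, Proposition~\ref{adjunction prop} tells us that $FGf$ is the arrow $X\stackrel{[\id_X\ 0]}{\longrightarrow} X\oplus Y$, and that $\epsilon_f\colon FGf\rightarrow f$ is the morphism of arrows whose domain component is $\id_X\colon X\rightarrow X$ and whose codomain component is the map $\varphi\colon X\oplus Y\rightarrow Y$ restricting to $f$ on the summand $X$ and to $\id_Y$ on the summand $Y$.

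First I would compute the two pointwise kernels. The domain component is $\id_X$, so its kernel is $0$. The codomain component $\varphi$ is (split) epic, and its kernel is the ``antidiagonal'' copy of $X$: the monomorphism $X\rightarrow X\oplus Y$, $x\mapsto (x,-f(x))$, has image exactly $\{(x,y)\colon \varphi(x,y)=0\}$ and hence furnishes an isomorphism $\ker\varphi\cong X$.

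Then I would assemble these into the kernel arrow. Since kernels in $\mathcal{A}^{(\bullet\rightarrow\bullet)}$ are pointwise, $\ker\epsilon_f$ is the arrow in $\mathcal{A}$ whose domain is $\ker(\id_X)=0$, whose codomain is $\ker\varphi\cong X$, and whose structure map is the restriction of $[\id_X\ 0]$ to these kernels. As the domain is $0$, this structure map is forced to be the unique morphism $0\rightarrow X$, giving $\ker\epsilon_f\cong (0\rightarrow X)$ as claimed.

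The argument is entirely elementary. The only two points that require any thought are the standard observation that kernels in the arrow category are computed pointwise, and the explicit identification $\ker\varphi\cong X$ via the antidiagonal; neither presents a genuine obstacle, so I expect no hard step.
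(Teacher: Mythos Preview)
Your proposal is correct and follows essentially the same route as the paper: both arguments compute $\ker\epsilon_f$ pointwise (domain kernel $=0$, codomain kernel $=\ker[f\ \id_Y]$) and then identify $\ker[f\ \id_Y]\cong X$. The only cosmetic difference is that you exhibit the antidiagonal map $X\to X\oplus Y$ directly, whereas the paper packages the same identification as a change-of-basis automorphism $m$ of $X\oplus Y$ carrying the split sequence $0\to X\to X\oplus Y\to Y\to 0$ onto the sequence with quotient map $[f\ \id_Y]$; these are the same computation.
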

\begin{proof}
The short exact sequence in $\mathcal{A}^{(\bullet\rightarrow\bullet)}$ 
\begin{equation}\label{diag 3} 0 \rightarrow \ker \epsilon_f \rightarrow FGf\rightarrow f \rightarrow 0\end{equation}
is the commutative diagram with exact rows in $\mathcal{A}$
\begin{equation}\label{diag 4}\xymatrix{ 
0 \ar[r]\ar[d] & 0 \ar[d]\ar[r]  & X \ar[r]^{\id_X}\ar[d]^{\left[ \id_X\ 0\right]} & X \ar[d]^f\ar[r] & 0 \ar[d] \\
0 \ar[r] & \ker \left[ f\ \id_Y\right] \ar[r]  & X\oplus Y \ar[r]_(0.6){\left[ f\ \id_Y\right]^{\perp}} & Y \ar[r] & 0. }\end{equation}
All we need is to produce an isomorphism $\ker \left[ f\ \id_Y\right] \cong X$.
Consider the commutative diagram with exact rows in $\mathcal{A}$:
\begin{equation}\label{diag 2}\xymatrix{ 
0 \ar[r]\ar[d] & \ker \left[ f\ \id_Y\right] \ar[r]\ar[d]  & X\oplus Y\ar[d]^{m} \ar[r]^(0.6){\left[ f\ \id_Y\right]^{\perp}} & Y\ar[d]^{\id_Y} \ar[r] & 0 \\
0 \ar[r] & X \ar[r]  & X\oplus Y \ar[r]_(0.6){\left[ 0\ \id_Y\right]^{\perp}} & X \ar[r] & 0  }\end{equation}
where $m$ is given by the matrix of maps
\[ m = \left[ \begin{array}{ll} \id_X & f \\ 0 & \id_Y \end{array} \right] .\]
The map $m$ is invertible, with inverse given by
\[ m^{-1} = \left[ \begin{array}{ll} \id_X & -f \\ 0 & \id_Y \end{array}\right] ,\]
so the vertical maps $m$ and $\id_Y$ in diagram~\ref{diag 2} are isomorphisms. Hence the map $\ker \left[ f\ \id_Y\right] \rightarrow X$
of kernels is also an isomorphism.
\end{proof}

\begin{theorem}\label{main thm on morphisms}
Let $\mathcal{A}$ be an abelian category and let $E$ be the allowable class on the abelian category $\mathcal{A}^{(\bullet\rightarrow\bullet)}$
of morphisms in $\mathcal{A}$ consisting of all short exact sequences of the form
\[ 0 \rightarrow \ker \epsilon_f \rightarrow FGf \rightarrow f \rightarrow 0\]
for all objects $f$ in $\mathcal{A}^{(\bullet\rightarrow\bullet)}$, i.e., for all morphisms $f$ in $\mathcal{A}$.
(This is a special case of Definition~\ref{def of allowable class}.)
Here $F,G$ are as in Proposition~\ref{adjunction prop}.
Then each of the following statements is true:
\begin{itemize}
\item A morphism $f$ of $\mathcal{A}$ is $E$-projective if and only if $f$ is a split monomorphism.
\item The category $\mathcal{A}^{(\bullet\rightarrow\bullet)}$ has enough $E$-projectives, that is, every object in $\mathcal{A}^{(\bullet\rightarrow\bullet)}$ is the codomain of some $E$-epimorphism
with $E$-projective domain.
\item The allowable class $E$ is {\em hereditary}, i.e., the relative $\Ext$-groups $\Ext_{\mathcal{C}/E}^n(f, g)$ vanish for all $f,g$
if $n>1$. 
\item A morphism $f$ of $\mathcal{A}$ is $E$-injective if and only if $f$ is a split epimorphism.\end{itemize}
\end{theorem}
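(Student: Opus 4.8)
The plan is to build everything on a single identification: the objects of $\mathcal{A}^{(\bullet\rightarrow\bullet)}$ lying in the image of $F$ are \emph{precisely} the split monomorphisms of $\mathcal{A}$. By Proposition~\ref{adjunction prop}, $F(X,C) = (X\rightarrow X\oplus C)$ is the coproduct inclusion, a split monomorphism; conversely a split monomorphism $f$ with cokernel $C$ is isomorphic in $\mathcal{A}^{(\bullet\rightarrow\bullet)}$ to $F(\dom f, C)$. Since $F(X,C) = FG(\,0\colon X\rightarrow C\,)$, every such object is $E$-projective by the lifting argument already carried out in Theorem~\ref{sha agrees with relative ext}; this gives one direction of the first bullet. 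For the converse I would use that the counit $\epsilon_f\colon FGf\rightarrow f$ is an $E$-epimorphism, so if $f$ is $E$-projective then $\id_f$ lifts through $\epsilon_f$ to a section, exhibiting $f$ as a retract of $FGf$. A retract of a split monomorphism is again a split monomorphism: if $(i_0,i_1)$ and $(r_0,r_1)$ realize $f$ as a retract of a split monomorphism $g$ whose underlying map has retraction $s$, then $r_0 s i_1$ retracts the underlying map of $f$ (using $i_1 f = g i_0$ and $sg=\id$). Hence $E$-projective implies split monomorphism, completing the first bullet.

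The second and third bullets then follow quickly. For \emph{enough $E$-projectives}, the defining $E$-short exact sequence $0\rightarrow \ker\epsilon_f \rightarrow FGf\rightarrow f\rightarrow 0$ already presents $f$ as the codomain of the $E$-epimorphism $\epsilon_f$ whose domain $FGf$ is $E$-projective. For \emph{heredity}, the key input is Lemma~\ref{identification of kernel of adjunction}: $\ker\epsilon_f\cong(0\rightarrow\dom f)=F(0,\dom f)$, which by the first bullet is itself $E$-projective. Thus the defining sequence is an $E$-projective $E$-resolution of $f$ of length one, so $\Ext^n_{\mathcal{C}/E}(f,g)$ vanishes for all $n>1$ and all $g$, i.e. $E$ is hereditary.

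The fourth bullet is the genuinely different one, and I expect it to be the main obstacle, because $E$-injectivity is controlled by the $E$-\emph{mono}morphisms, whereas Definition~\ref{def of allowable class} presents $E$ asymmetrically, through the left adjoint $F$ and its counit. My plan is to restore the symmetry and then dualize the first bullet. First I would produce a right adjoint $H$ to $G$, the exact dual of Proposition~\ref{adjunction prop}: one checks directly that $H(P,Q)=(P\oplus Q\rightarrow Q)$ (the projection) is right adjoint to $G$, with unit $\eta_f\colon f\rightarrow HGf$ the graph map, whose image under $G$ is a split monomorphism. The crucial structural point is that $E$ coincides with the proper class of short exact sequences that split after applying $G$: the generating counit sequences are $G$-split by the triangle identity, and conversely a $G$-split epimorphism is exactly one against which every $F(-)$ (equivalently, every $E$-projective) lifts, so the two proper classes agree. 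This class is manifestly self-dual under $(\mathcal{A}^{(\bullet\rightarrow\bullet)})^{\mathrm{op}}\cong(\mathcal{A}^{\mathrm{op}})^{(\bullet\rightarrow\bullet)}$, which carries the adjunction $F\dashv G$ to the adjunction $G\dashv H$ read in the opposite category and interchanges split monomorphisms with split epimorphisms.

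With this in hand, the fourth bullet is the dual of the first. The object $HGf=(\dom f\oplus\cod f\rightarrow\cod f)$ is a split epimorphism, and it is $E$-injective by the exact dual of the lifting argument for $FGf$ (a map out of an $E$-subobject extends because $G$ of the $E$-monomorphism is a split monomorphism, and one extends along split monomorphisms after applying $G$ and then uses $G\dashv H$). Every split epimorphism is isomorphic to some $H(\ker,\cod)$, hence $E$-injective, and an $E$-injective $J$ is a retract of $HGJ$ by extending $\id_J$ along the $E$-monomorphism $\eta_J$; since a retract of a split epimorphism is a split epimorphism (the dual diagram chase), $E$-injectives are exactly the split epimorphisms. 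The one place demanding real care is the identification of $E$ with the $G$-split proper class, since without it the very notion of $E$-monomorphism, and hence of $E$-injective, is not pinned down by the counit sequences alone; establishing that identification is where I expect the substance of the argument to lie.
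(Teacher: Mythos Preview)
Your argument is correct, and it differs from the paper's in instructive ways.

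For the first bullet you argue structurally: an $E$-projective $f$ is a retract of $FGf$ via a lift of $\id_f$ along the $E$-epimorphism $\epsilon_f$, and retracts of split monomorphisms are split monomorphisms. The paper instead tests $f$ against one specific $E$-epimorphism, namely $\epsilon_h$ for $h\colon \dom f\rightarrow 0$, and reads off a retraction directly from the resulting lift. For the converse direction you observe that every split monomorphism is isomorphic to some $F(X,C)=FG(0\colon X\rightarrow C)$ and invoke the general fact (from the proof of Theorem~\ref{sha agrees with relative ext}) that every $FG(-)$ is $E$-projective; the paper instead splits a split monomorphism as (isomorphism)$\oplus$(map with zero domain) and produces the lift by hand in each summand. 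Your route is shorter and emphasizes that the $E$-projectives are exactly the essential image of $F$; the paper's route is more self-contained and avoids citing the earlier theorem.

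The second and third bullets you handle exactly as the paper does.

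On the fourth bullet you are, if anything, more scrupulous than the paper. The paper simply asserts that the proof is ``dual'' to the first bullet, which is correct in spirit but, as you note, the definition of $E$ via the counit of $F\dashv G$ is asymmetric, so one must say why duality applies. Your plan---produce the right adjoint $H$ with $HG f=(\dom f\oplus\cod f\rightarrow\cod f)$, identify $E$ with the proper class of $G$-split short exact sequences, and then invoke $(\mathcal{A}^{(\bullet\rightarrow\bullet)})^{\mathrm{op}}\cong(\mathcal{A}^{\mathrm{op}})^{(\bullet\rightarrow\bullet)}$---is the clean conceptual justification. It is worth noting that the paper's bare-hands approach also goes through without this machinery: testing an $E$-injective $p\colon A\rightarrow B$ against the $E$-monomorphism $\ker\epsilon_Y\hookrightarrow FGY$ for $Y=(B\rightarrow 0)$, with the map $(0\rightarrow B)\rightarrow p$ given by $\id_B$ on codomains, forces the extension to supply a section of $p$. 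So the ``dual'' the paper alludes to can be made literal at the level of the test diagram, even if it is not a formal categorical duality without your identification of $E$ with the $G$-split class.
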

\begin{proof}
First, suppose $f: X \rightarrow Y$ is $E$-projective. Then consider the diagram in $\mathcal{A}^{(\bullet\rightarrow\bullet)}$:
\[ \xymatrix{ & f\ar@{-->}[ld]\ar[rd] & \\ g \ar[rr] & & h }\]
given by the following commutative diagram in $\mathcal{A}$:
\[ \xymatrix{  & X \ar@{-->}[ddl]\ar[d]^(0.6){f} \ar[ddr]^{\id_X} & \\
 & Y \ar@{-->}[ddl]\ar[ddr] & \\
X \ar[d]_{g=\id_X} \ar[rr]^{\id_X} & & X \ar[d]^{h=0} \\
X \ar[rr] & & 0.}\]
Since $f$ is assumed $E$-projective, maps exist as in the dotted lines which make the diagram commute.
The top dotted map must then be the identity map on $X$. This in turn forces the bottom dotted map to be a retraction of $f$. So $f$ must be split monic.

Now suppose $f: X \rightarrow Y$ is split monic. We want to show that it is $E$-projective. Since $f$ is split monic, it can be written as the direct sum 
$f = f^{\prime}\oplus f^{\prime\prime}$ with $f^{\prime}$ an isomorphism and $f^{\prime\prime}$ having zero domain. So it suffices to show that every isomorphism 
in $\mathcal{A}$ is $E$-projective and every map with zero domain in $\mathcal{A}$ is $E$-projective.
For the isomorphisms this is trivial. For the maps $f^{\prime\prime}$ with zero domain, we must simply produce a map $\ell$ to fill in the dotted line in each commutative
diagram of the form
\[\xymatrix{ 
 & 0\ar[d] \ar[rdd] \ar[ldd] & \\ 
 & Y \ar[rdd]_(0.7){j} \ar@{-->}[ldd]^(0.7){\ell}  & \\
M^{\prime} \ar[rr]^{\id_{M^{\prime}}} \ar[d]_{\left[ 0\ i\right]} & & M^{\prime}\ar[d]^i \\
M^{\prime}\oplus M \ar[rr]^{\left[ i\ \id_M\right]^{\perp}} & & M. }\]
The desired map $\ell$ is $\ell = \left[ 0\ \id_M\right]^{\perp}\circ j$.

So we have proven that a morphism in $\mathcal{A}$ is $E$-projective if and only if the morphism is split monic.
Now for any morphism $f: X \rightarrow Y$ in $\mathcal{A}$ we have the diagram
\[\xymatrix{
 0 \ar[r] \ar[d] & 0\ar[r] \ar[d] & X\ar[r]^{\id_X}\ar[d]^{\left[ \id_X\ 0\right]} & X\ar[d]^f\ar[r] & 0 \ar[d] \\
 0 \ar[r]  & X \ar[r]  & X\oplus Y\ar[r]_{\left[ f\ \id_Y\right]} & Y \ar[r] & 0 }\]
in which the lower-right copy of $X$ is (isomorphic to) $X$ by virtue of Lemma~\ref{identification of kernel of adjunction}.
So in the equivalent short exact sequence in $\mathcal{A}^{(\bullet\rightarrow\bullet)}$,
\[ 0 \rightarrow \ker \epsilon_f \rightarrow FGf\stackrel{\epsilon_f}{\longrightarrow} f \rightarrow 0,\]
both the maps $\ker \epsilon_f$ and $FGf$ in $\mathcal{A}$ are split monomorphisms, hence they are each 
$E$-projective objects of $\mathcal{A}^{(\bullet\rightarrow\bullet)}$. Hence every object of $\mathcal{A}^{(\bullet\rightarrow\bullet)}$
has a length $1$ $E$-resolution by $E$-projective objects. Hence $E$ is hereditary and $\mathcal{A}^{(\bullet\rightarrow\bullet)}$ has
enough $E$-projectives.

The proof that an object $f$ of $\mathcal{A}^{(\bullet\rightarrow\bullet)}$ is $E$-injective if and only if $f$ is split epic is dual to the proof that
$f$ is $E$-projective if and only if it is split monic, given above.
\end{proof}

\begin{corollary}\label{split monics cor}
Let $\mathcal{A}$ be an abelian category and let $E$ be as in Theorem~\ref{main thm on morphisms}.
Let $h: Z^{\prime}\rightarrow Z$ be a morphism in $\mathcal{A}$. Then the following are equivalent:
\begin{itemize}
\item 
If 
\begin{equation}\label{diag 2000}\xymatrix{ 0 \ar[r]\ar[d] & X^{\prime}\ar[d]^f \ar[r]^{i^{\prime}} & Y^{\prime}\ar[r]^{\pi^{\prime}}\ar[d]^g & Z^{\prime} \ar[r]\ar[d]^h & 0 \ar[d] \\
0 \ar[r] & X \ar[r]^i & Y \ar[r]^{\pi} & Z \ar[r] & 0 }\end{equation}
is a commutative diagram in $\mathcal{C}$ with split exact rows, then there exists a splitting $r^{\prime}: Y^{\prime}\rightarrow Y$ of $i^{\prime}$
and a splitting $r: Y\rightarrow X$ of $i$ such that $f\circ r^{\prime} = r\circ g$.
\item 
If diagram~\ref{diag 2000}
is a commutative diagram in $\mathcal{C}$ with split exact rows, then there exists a splitting $s^{\prime}: Y^{\prime}\rightarrow Y$ of $\pi^{\prime}$
and a splitting $s: Y\rightarrow X$ of $\pi$ such that $g\circ s^{\prime} = s\circ h$.
\item
The first compatible splitting obstruction group $\Sha^1(h,f)\cong 0$ is trivial for all morphisms $f$ in $\mathcal{A}$.
\item 
The morphism $h$ is $E$-projective in $\mathcal{A}^{(\bullet\rightarrow\bullet)}$.
\item 
The morphism $h$ is split monic.
\end{itemize}
\end{corollary}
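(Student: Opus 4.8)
The plan is to treat the vanishing statement $\Sha^1(h,f)\cong 0$ for all $f$ as the hub and connect each of the other four statements to it. First I would dispose of the equivalence of the last two statements at once: \emph{$h$ is $E$-projective} if and only if \emph{$h$ is split monic} is precisely the first assertion of Theorem~\ref{main thm on morphisms}, so nothing remains to prove there.

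Next I would link $E$-projectivity to the vanishing of $\Sha^1$. Writing $\mathcal{C}=\mathcal{A}^{(\bullet\rightarrow\bullet)}$, the degree-one case of the Hurewicz theorem (Theorem~\ref{sha agrees with relative ext}) supplies a natural isomorphism $\Ext^1_{\mathcal{C}/E}(h,f)\cong\Sha^1(h,f)$, so the vanishing of $\Sha^1(h,f)$ for every $f$ is the same as the vanishing of the functor $\Ext^1_{\mathcal{C}/E}(h,-)$. I would then invoke the standard fact of relative homological algebra that, in a category with enough $E$-projectives---which holds here by Theorem~\ref{main thm on morphisms}---an object is $E$-projective exactly when its first relative $\Ext$ group vanishes. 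The forward direction is immediate; for the converse, one applies the hypothesis to the canonical $E$-short exact sequence $0\rightarrow\ker\epsilon_h\rightarrow FGh\rightarrow h\rightarrow 0$, which must then split and thereby exhibits $h$ as a retract of the $E$-projective $FGh$. This establishes the equivalence of the third and fourth statements.

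Finally I would translate the two concrete splitting statements into the $\Sha^1$ language. The essential observation is that diagram~\ref{diag 2000} is exactly a representative of an element of $\Sha^1(h,f)$, with $h$ as source, $f$ as target, and $g$ as the middle term; so by the $n=1$ case of Corollary~\ref{diagram splittings corollary}, the vanishing of $\Sha^1(h,f)$ for all $f$ is equivalent to the assertion that every such diagram with split rows is compatibly split, i.e.\ that the short exact sequence $0\rightarrow f\rightarrow g\rightarrow h\rightarrow 0$ in $\mathcal{A}^{(\bullet\rightarrow\bullet)}$ splits. It then remains only to recall that a short exact sequence in an abelian category splits if and only if its monomorphism admits a retraction, if and only if its epimorphism admits a section. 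Unwinding the arrow-category structure, a retraction of $f\rightarrow g$ is precisely a compatible pair of retractions of $i'$ and $i$ as in the first statement (so that $r'\colon Y'\rightarrow X'$, $r\colon Y\rightarrow X$, and $f\circ r'=r\circ g$), while a section of $g\rightarrow h$ is precisely a compatible pair of sections of $\pi'$ and $\pi$ as in the second statement (so that $s'\colon Z'\rightarrow Y'$, $s\colon Z\rightarrow Y$, and $g\circ s'=s\circ h$). Hence both the first and the second statements are equivalent to the third, closing the web of equivalences.

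I expect the main obstacle to be bookkeeping rather than any substantive difficulty: one must fix the direction conventions carefully---in particular reading the splitting $r'$ of $i'$ as a map $Y'\rightarrow X'$ and the splitting $s'$ of $\pi'$ as a map $Z'\rightarrow Y'$---and verify that the single notion of a split short exact sequence in $\mathcal{A}^{(\bullet\rightarrow\bullet)}$ simultaneously reproduces the retraction formulation of the first statement and the section formulation of the second. Since every ingredient is furnished by the already-established Theorem~\ref{main thm on morphisms}, Theorem~\ref{sha agrees with relative ext}, and Corollary~\ref{diagram splittings corollary}, no new homological input is needed.
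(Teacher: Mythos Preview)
Your proposal is correct and follows essentially the same approach as the paper, which simply cites Theorem~\ref{main general thm} and Theorem~\ref{main thm on morphisms}. The only minor difference is that you route the equivalence between $E$-projectivity of $h$ and the vanishing of $\Sha^1(h,-)$ through the degree-one Hurewicz isomorphism (Theorem~\ref{sha agrees with relative ext}) and the characterization of relative projectives via $\Ext^1_{\mathcal{C}/E}$, whereas the paper's intended argument presumably goes directly through the $n=1$ condition of Theorem~\ref{main general thm}: if every map $\ker\epsilon_h\rightarrow f$ extends to $FGh$ for all $f$, then in particular the identity on $\ker\epsilon_h$ extends, splitting the canonical sequence and exhibiting $h$ as a retract of the $E$-projective $FGh$; the converse is equally immediate once that sequence splits. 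Both routes unwind to the same exact sequence, so your detour via Hurewicz is harmless but not strictly necessary.
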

\begin{proof}
Follows immediately from Theorem~\ref{main general thm} and Theorem~\ref{main thm on morphisms}.\end{proof}

\begin{corollary}\label{split epics cor}
Let $\mathcal{A}$ be an abelian category and let $E$ be as in Theorem~\ref{main thm on morphisms}.
Let $f: X^{\prime}\rightarrow X$ be a morphism in $\mathcal{A}$. Then the following are equivalent:
\begin{itemize}
\item 
If diagram~\ref{diag 2000}
is a commutative diagram in $\mathcal{C}$ with split exact rows, then there exists a splitting $r^{\prime}: Y^{\prime}\rightarrow Y$ of $i^{\prime}$
and a splitting $r: Y\rightarrow X$ of $i$ such that $f\circ r^{\prime} = r\circ g$.
\item 
If diagram~\ref{diag 2000}
is a commutative diagram in $\mathcal{C}$ with split exact rows, then there exists a splitting $s^{\prime}: Y^{\prime}\rightarrow Y$ of $\pi^{\prime}$
and a splitting $s: Y\rightarrow X$ of $\pi$ such that $g\circ s^{\prime} = s\circ h$.
\item
The first compatible splitting obstruction group $\Sha^1(h,f)\cong 0$ is trivial for all morphisms $h$ in $\mathcal{A}$.
\item 
The morphism $f$ is $E$-injective in $\mathcal{A}^{(\bullet\rightarrow\bullet)}$.
\item 
The morphism $f$ is split epic.
\end{itemize}
\end{corollary}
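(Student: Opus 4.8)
The plan is to read this corollary as the injective/epimorphism dual of Corollary~\ref{split monics cor}: there we fixed the second argument of $\Sha^1(-,-)$ and let the first vary, obtaining a characterization of split monomorphisms by $E$-projectivity, whereas here we fix the first argument $f$ and let the second vary, and we expect split epimorphisms to be characterized by $E$-injectivity. Accordingly I would assemble the equivalence by citing three facts already available and then matching the two concrete splitting conditions to the vanishing of $\Sha^1$.

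First I would record that ``$f$ is $E$-injective if and only if $f$ is split epic'' is exactly the last bullet of Theorem~\ref{main thm on morphisms}, so that the fourth and fifth conditions are equivalent with no further argument. To link $E$-injectivity with the vanishing of $\Sha^1$, I would apply the corollary immediately following Theorem~\ref{main general thm}, the one characterizing $E$-injective objects, in the situation of Proposition~\ref{adjunction prop}, i.e.\ with $\mathcal{C}=\mathcal{A}^{(\bullet\rightarrow\bullet)}$, $I=\{i\}$, $\mathcal{C}_i\cong\mathcal{A}\times\mathcal{A}$, and $F,G$ the adjoint pair given there. Taking the fixed object in that corollary to be our morphism $f$ gives ``$\Sha^1(h,f)\cong 0$ for all morphisms $h$ if and only if $f$ is $E$-injective,'' which ties the third, fourth, and fifth conditions together.

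It then remains to identify the first two (concrete) conditions with the vanishing of $\Sha^1(h,f)$ for all $h$. By the description of $\Sha^n(f,g)$ opening this section, a diagram of the form~\ref{diag 2000} with split exact rows is precisely a short exact sequence $0\rightarrow f\rightarrow g\rightarrow h\rightarrow 0$ in $\mathcal{A}^{(\bullet\rightarrow\bullet)}$ whose image under $G$ is split, and it represents $0$ in $\Sha^1(h,f)$ exactly when it splits in the arrow category. The key observation I would make explicit is that the compatibility equations in the two conditions are nothing other than the requirement that the relevant pairs of maps be morphisms in $\mathcal{A}^{(\bullet\rightarrow\bullet)}$: the pair $(r',r)$ with $f\circ r'=r\circ g$ is exactly a retraction of the monomorphism $f\rightarrow g$, while the pair $(s',s)$ with $g\circ s'=s\circ h$ is exactly a section of the epimorphism $g\rightarrow h$. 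Thus the first condition says the sub-object inclusion splits and the second says the quotient projection splits; both are equivalent to the splitting of $0\rightarrow f\rightarrow g\rightarrow h\rightarrow 0$ in $\mathcal{A}^{(\bullet\rightarrow\bullet)}$, hence to each other, and hence to $\Sha^1(h,f)\cong 0$.

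The only step that is not a bare citation is this last identification, and even it reduces to the standard fact that in an abelian category, here $\mathcal{A}^{(\bullet\rightarrow\bullet)}$, a short exact sequence splits if and only if its monomorphism admits a retraction if and only if its epimorphism admits a section. The mild obstacle is simply to verify that the compatibility equations $f\circ r'=r\circ g$ and $g\circ s'=s\circ h$ are exactly the commutativity conditions making $(r',r)$ and $(s',s)$ into arrows of $\mathcal{A}^{(\bullet\rightarrow\bullet)}$, so that no additional compatibility must be arranged by hand. With these matchings in place, the corollary follows immediately from Theorem~\ref{main general thm} and Theorem~\ref{main thm on morphisms}, exactly as in the proof of Corollary~\ref{split monics cor}.
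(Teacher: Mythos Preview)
Your proposal is correct and follows essentially the same approach as the paper, which simply states that the result follows immediately from Theorem~\ref{main general thm} and Theorem~\ref{main thm on morphisms}. You have just unpacked this implication in detail, additionally invoking the unnamed corollary to Theorem~\ref{main general thm} (characterizing $E$-injectives via universal vanishing of $\Sha^1$) and making explicit why the two compatibility equations are precisely the conditions for a retraction or section in $\mathcal{A}^{(\bullet\rightarrow\bullet)}$; this is exactly what the paper's one-line proof leaves to the reader.
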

\begin{proof}
Follows immediately from Theorem~\ref{main general thm} and Theorem~\ref{main thm on morphisms}.\end{proof}

\begin{corollary} {\bf (Compatible splitting duality.)}\label{splitting duality}
Let $\mathcal{A}$ be an abelian category with enough projectives and let $E$ be as in Theorem~\ref{main thm on morphisms}.
Let $f: X \rightarrow Y$ and $g: V\rightarrow W$ be morphisms in $\mathcal{A}$. 
Then, for all $t\geq 1$, we have isomorphisms
\begin{eqnarray*} \Sha^t(f,g) & \cong & (R^1_E\Ext^{t-1}_{\mathcal{A}^{(\bullet\rightarrow\bullet)}}(-,g))(f) \\
 & \cong & \Ext^{t-1}_{\mathcal{A}^{(\bullet\rightarrow\bullet)}}(FG\ker\epsilon_f, g)/\Ext^{t-1}_{\mathcal{A}^{(\bullet\rightarrow\bullet)}}(FGf, g)\end{eqnarray*}
and an exact sequence
\[ 0 \rightarrow \Sha^t(f,g) \rightarrow \Ext^t_{\mathcal{A}^{(\bullet\rightarrow\bullet)}}(f,g) \rightarrow 
 \Ext^t_{\mathcal{A}}(X,V) \times\Ext^t_{\mathcal{A}}(Y,W) \rightarrow
 \Ext^t_{\mathcal{A}}(X,W) \rightarrow \Sha^{t+1}(f, g)\rightarrow 0.\]
\end{corollary}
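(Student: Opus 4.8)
The two displayed isomorphisms require essentially no new work: they are the specialization of Corollary~\ref{compatible splitting duality in hereditary case} to the present situation. By Theorem~\ref{main thm on morphisms} the allowable class $E$ on $\mathcal{A}^{(\bullet\rightarrow\bullet)}$ is hereditary, and by Proposition~\ref{adjunction prop} the functor $G$ has the left adjoint $F$, which is resolving and is easily seen to be exact (exactness in $\mathcal{A}^{(\bullet\rightarrow\bullet)}$ is tested on domains and on codomains, and $F$ acts there by the exact operations of identity and direct-summand insertion). Since $\mathcal{A}$ has enough projectives, so does $\mathcal{A}^{(\bullet\rightarrow\bullet)}$, so the compatible splitting spectral sequence underlying Corollary~\ref{compatible splitting duality in hereditary case} is available. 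Taking $\mathcal{C} = \mathcal{A}^{(\bullet\rightarrow\bullet)}$, $I = \{i\}$, $\mathcal{C}_i = \mathcal{A}\times\mathcal{A}$, and substituting $f$ for $Y$ and $g$ for $X$ then produces the asserted isomorphisms $\Sha^t(f,g)\cong(R^1_E\Ext^{t-1}_{\mathcal{A}^{(\bullet\rightarrow\bullet)}}(-,g))(f)\cong\Ext^{t-1}_{\mathcal{A}^{(\bullet\rightarrow\bullet)}}(FG\ker\epsilon_f,g)/\Ext^{t-1}_{\mathcal{A}^{(\bullet\rightarrow\bullet)}}(FGf,g)$ for all $t\geq 1$.

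For the six-term exact sequence the plan is to apply the contravariant functor $\Ext^{\bullet}_{\mathcal{A}^{(\bullet\rightarrow\bullet)}}(-,g)$ to the defining short exact sequence
\[ 0 \rightarrow \ker\epsilon_f \rightarrow FGf \stackrel{\epsilon_f}{\longrightarrow} f \rightarrow 0 \]
in $\mathcal{A}^{(\bullet\rightarrow\bullet)}$ and then to identify two of the resulting terms. Because $F$ is exact and resolving, Lemma~\ref{change of rings lemma} gives $\Ext^t_{\mathcal{A}^{(\bullet\rightarrow\bullet)}}(FGf,g)\cong\Ext^t_{\mathcal{A}\times\mathcal{A}}(Gf,Gg)$, and since $Gf = (\dom f,\cod f) = (X,Y)$ and $Gg = (V,W)$ this term is $\Ext^t_{\mathcal{A}}(X,V)\times\Ext^t_{\mathcal{A}}(Y,W)$. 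For the kernel term, Lemma~\ref{identification of kernel of adjunction} identifies $\ker\epsilon_f$ with the object $0\rightarrow X$; crucially, this object is itself in the image of $FG$, since $G(0\rightarrow X) = (0,X)$ and $F(0,X) = (0\rightarrow 0\oplus X) = (0\rightarrow X)$. A second application of Lemma~\ref{change of rings lemma} therefore yields $\Ext^t_{\mathcal{A}^{(\bullet\rightarrow\bullet)}}(\ker\epsilon_f,g)\cong\Ext^t_{\mathcal{A}\times\mathcal{A}}((0,X),(V,W))\cong\Ext^t_{\mathcal{A}}(X,W)$.

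It then remains to cut the long exact sequence into the advertised pieces. Write $\alpha_t$ for the map $\Ext^t_{\mathcal{A}^{(\bullet\rightarrow\bullet)}}(f,g)\rightarrow\Ext^t_{\mathcal{A}^{(\bullet\rightarrow\bullet)}}(FGf,g)$ induced by $\epsilon_f$, and $\beta_t,\delta_t$ for the next two maps of the long exact sequence. By Definition~\ref{def of sha} the kernel of $\alpha_t$ is precisely $\Sha^t(f,g)$, which yields exactness of the initial segment $0\rightarrow\Sha^t(f,g)\rightarrow\Ext^t_{\mathcal{A}^{(\bullet\rightarrow\bullet)}}(f,g)\rightarrow\Ext^t_{\mathcal{A}}(X,V)\times\Ext^t_{\mathcal{A}}(Y,W)$; exactness of the long exact sequence at $\Ext^t_{\mathcal{A}^{(\bullet\rightarrow\bullet)}}(FGf,g)$ gives exactness at the middle term. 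Finally, the connecting map $\delta_t$ lands in $\ker\alpha_{t+1} = \Sha^{t+1}(f,g)$ and surjects onto it, with kernel equal to the image of $\beta_t$; hence $\delta_t$ descends to a surjection of $\Ext^t_{\mathcal{A}}(X,W)$ onto $\Sha^{t+1}(f,g)$ whose kernel is the image of $\beta_t$, completing exactness of the six-term sequence.

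The one step that demands care is the identification $\Ext^t_{\mathcal{A}^{(\bullet\rightarrow\bullet)}}(\ker\epsilon_f,g)\cong\Ext^t_{\mathcal{A}}(X,W)$: one must first recognize, via Lemma~\ref{identification of kernel of adjunction}, that $\ker\epsilon_f$ is the object $0\rightarrow X$ and, second, that this object lies in the image of $FG$, so that Lemma~\ref{change of rings lemma} applies. A further (optional) verification, not needed for the bare exactness in the statement but matching the discussion in the introduction, is that under these isomorphisms the middle map becomes the natural map $\Ext^t_{\mathcal{A}}(\dom f,\dom g)\times\Ext^t_{\mathcal{A}}(\cod f,\cod g)\rightarrow\Ext^t_{\mathcal{A}}(\dom f,\cod g)$ built from pushforward along $g$ and pullback along $f$; confirming this amounts to tracing the functoriality of $\Ext$ through the adjunction $F\dashv G$.
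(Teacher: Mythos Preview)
Your proof is correct and follows essentially the same approach as the paper. For the isomorphisms you invoke Theorem~\ref{main thm on morphisms} (hereditariness of $E$) and Corollary~\ref{compatible splitting duality in hereditary case}, exactly as the paper does; for the six-term exact sequence you work directly with the long exact sequence obtained by applying $\Ext^{\bullet}_{\mathcal{A}^{(\bullet\rightarrow\bullet)}}(-,g)$ to $0\to\ker\epsilon_f\to FGf\to f\to 0$ and then identify the two middle terms via Lemma~\ref{change of rings lemma} and Lemma~\ref{identification of kernel of adjunction}, whereas the paper phrases the same step as a consequence of the already-proven quotient description of $\Sha^{t+1}$ together with the observation $\ker\epsilon_f\cong(0\to X)$---but since that quotient description itself comes from the same long exact sequence (via the collapsed spectral sequence), the two derivations are really the same argument with different packaging.
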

\begin{proof}
The isomorphisms are corollaries of Theorem~\ref{main thm on morphisms} and Corollary~\ref{compatible splitting duality in hereditary case}.
The exact sequence follows from the given isomorphisms, together with the observation (in Lemma~\ref{identification of kernel of adjunction}) that
$\ker \epsilon_f$ is the map $0 \rightarrow X$, hence
$\Ext^t_{\mathcal{A}^{(\bullet\rightarrow\bullet)}}(\ker\epsilon_f,g) \cong \Ext^t_{\mathcal{A}}(X, W)$.
\end{proof}

\bibliography{/home/asalch/texmf/tex/salch}{}

\def\cprime{$'$} \def\cprime{$'$}
\begin{thebibliography}{1}

\bibitem{MR1731415}
Henri Cartan and Samuel Eilenberg.
\newblock {\em Homological algebra}.
\newblock Princeton Landmarks in Mathematics. Princeton University Press,
  Princeton, NJ, 1999.
\newblock With an appendix by David A. Buchsbaum, Reprint of the 1956 original.

\bibitem{MR1344215}
Saunders Mac~Lane.
\newblock {\em Homology}.
\newblock Classics in Mathematics. Springer-Verlag, Berlin, 1995.
\newblock Reprint of the 1975 edition.

\bibitem{MR1712872}
Saunders Mac~Lane.
\newblock {\em Categories for the working mathematician}, volume~5 of {\em
  Graduate Texts in Mathematics}.
\newblock Springer-Verlag, New York, second edition, 1998.

\bibitem{MR2261462}
J.~S. Milne.
\newblock {\em Arithmetic duality theorems}.
\newblock BookSurge, LLC, Charleston, SC, second edition, 2006.

\bibitem{MR0298694}
Daniel Quillen.
\newblock The spectrum of an equivariant cohomology ring. {I}, {II}.
\newblock {\em Ann. of Math. (2)}, 94:549--572; ibid. (2) 94 (1971), 573--602,
  1971.

\bibitem{MR860042}
Douglas~C. Ravenel.
\newblock {\em Complex cobordism and stable homotopy groups of spheres}, volume
  121 of {\em Pure and Applied Mathematics}.
\newblock Academic Press Inc., Orlando, FL, 1986.

\bibitem{computationofmodelstructures}
Andrew Salch.
\newblock Computing all closed model structures on a given category.
\newblock {\em in preparation}.

\end{thebibliography}
\bibliographystyle{plain}
\end{document}